\newtheorem{theorem}{Theorem}[section]
\newtheorem{lemma}[theorem]{Lemma}
\newtheorem{corollary}[theorem]{Corollary}
\newtheorem{conjecture}[theorem]{Conjecture}
\theoremstyle{definition}
\newtheorem{remark}[theorem]{Remark}
\newtheorem{definition}[theorem]{Definition}
\numberwithin{equation}{section} \numberwithin{figure}{section}
\DeclareMathOperator{\an}{an}
\DeclareMathOperator{\Kum}{Kum}
\DeclareMathOperator{\Sec}{Sec}
\DeclareMathOperator{\ConSec}{ConSec}
\newcommand*\ratmap{\mathbin{\tikz [baseline=-0.25ex,-latex, dashed, ->] \draw [densely dashed] (0pt,0.5ex) -- (1.3em,0.5ex);}}
\definecolor{orange}{rgb}{1,0.5,0}
\title{Parshin's method and the geometric Bombieri-Lang conjecture}
\dedicatory{\hfill In memory of Jacob Murre }
\author{Finn Bartsch}
\address{Finn Bartsch \\
IMAPP Radboud University Nijmegen \\
PO Box 9010, 6500GL \\
Nijmegen, The Netherlands\\}
\email{f.bartsch@math.ru.nl }
\author{Ariyan Javanpeykar}
\address{Ariyan Javanpeykar \\ 
IMAPP Radboud University Nijmegen \\
PO Box 9010, 6500GL\\
 Nijmegen, The Netherlands}
\email{ariyan.javanpeykar@ru.nl }
\subjclass[2010]
{14D99  % Families, fibrations in algebraic geometry
(14E22, % Ramification problems in algebraic geometry
14G99,  % Arithmetic problems in algebraic geometry
11G99)} % Arithmetic algebraic geometry
\keywords{function fields, rational points, Lang-Vojta conjecture, abelian varieties}
\begin{document}

\begin{abstract}
In this short survey, we explain Parshin's proof of the geometric Bombieri--Lang conjecture, and show that it can be used to give an alternative proof of Xie--Yuan's recent resolution of the geometric Bombieri--Lang conjecture for projective varieties with empty special locus and admitting a finite morphism to a traceless abelian variety.
\end{abstract}
\maketitle

\thispagestyle{empty}

\section{Introduction}
In \cite{XieYuan1,XieYuan2}, Junyi Xie and Xinyi Yuan proved the geometric Bombieri--Lang conjecture for projective varieties with a finite morphism to an abelian variety.  In this short note we reprove part of their main result using an old ``anabelian'' method of Parshin \cite{ParshinLang, SzamuelyParshin}.
Our motivation for writing this note is to draw attention to Parshin's anabelian method and to the recent seminal work of Xie--Yuan.
 
Let $K /\mathbb{C}$ be a finitely generated field extension of transcendence degree one. Let $B$ be a smooth projective curve over $\mathbb{C}$ with function field $K(B)=K$. Let $X$ be a projective geometrically integral variety over $K$, and let $\mathcal{X} \to B$ be a projective model for $X$ over $B$.

We are interested in the following three notions. The first one is the ``Mordell property'' over function fields.  (In this paper we do not discuss the original version of the ``Mordell property'' over number fields considered  in  \cite{Lang2}.)

\begin{definition}
We say that $X$ is \emph{Mordellic (over $K$)} if $X(L)$ is finite for every finite field extension $L/K$. 
\end{definition}

Note that being Mordellic formalizes the property that $X$ ``has only finitely-many rational points'', and that it is equivalent to the finiteness of $T$-points of $\mathcal{X}$, where $T$ runs over every smooth projective curve over $\mathbb{C}$ with a finite morphism $T \to B$.

The property of $X$ having only finitely many rational points excludes the presence of abelian varieties (and more generally algebraic groups) inside $X$.

\begin{definition}
A proper geometrically integral variety $V$ over a field $k$ is \emph{groupless} if, for every abelian variety $A$ over $\overline{k}$, every morphism $A \to V_{\overline{k}}$ is constant.
\end{definition}

If $V$ is a proper geometrically integral variety over a field $k$ of characteristic zero, then the following are equivalent \cite[\S 3]{JXie}.
\begin{enumerate}
\item $V$ is groupless.
\item For every field extension $L/k$, the variety $V_L$ is groupless over $L$.
\item For every connected algebraic group $G$ over $\overline{k}$, every rational map $G\ratmap V_{\overline{k}}$ is constant.
\end{enumerate}

Note that the term ``groupless'' is motivated by the third characterization above. 
In more classical terms, a variety $V$ over a field $k$ is groupless if its ``special locus'' is empty. More precisely, define $\mathrm{Sp}(V)$ to be the closure of 
\[
\bigcup_{f} \mathrm{Im}(f)
\] in $V$,
where the union runs over all abelian varieties $A$ over $k$ and all non-constant rational maps $f \colon A \ratmap V$. With this definition, we see that $V$ is groupless if and only if $\mathrm{Sp}(V_{\overline{k}})$ is empty.

If $V$ is a smooth projective geometrically connected curve over $k$, then $V$ is groupless if and only if it has genus at least two. More generally, if $V$ is a closed subvariety of an abelian variety $A$ over a field $k$, then $\mathrm{Sp}(V)$ equals the union of all translates of positive-dimensional abelian subvarieties of $A$ contained in $V$. In particular, for a closed subvariety $V$ of an abelian variety $A$ over an algebraically closed field $k$, we see that $V$ is groupless if and only if $V$ does not contain the translate of any positive-dimensional abelian subvariety of $A$.

Recall that $K/\mathbb{C}$ is a finitely generated field extension of transcendence degree one. For every abelian variety $A$ over $K$, there is a finite field extension $L/K$ such that $A(L)$ is dense. Thus, if $X$ is Mordellic over $K$, then it is groupless.

Clearly, if $X$ is Mordellic and $V \to X$ is a non-constant morphism, then $V(K)$ is not dense. In other words, $X$ does not admit maps from varieties with many rational points. Besides abelian varieties, other examples of varieties with many $K$-points are provided by ``constant'' (positive-dimensional) varieties. Taking into account that $X(L)$ should be finite for \emph{every} finite extension $L/K$, we are led to the following notion of isotriviality.
 
\begin{definition}
We say that $X$ is \emph{highly non-isotrivial} if, for every variety $V$ over $\mathbb{C}$, every morphism $V\otimes_{\mathbb{C} } \overline{K} \to X\otimes_{K} \overline{K}$ is constant. (Note that $X$ is highly non-isotrivial if and only if, for every smooth projective curve $C$ over $\mathbb{C}$, every morphism $C\otimes_{\mathbb{C}} \overline{K} \to X\otimes_K {\overline{K}}$ is constant.)  
\end{definition}

It is not hard to show that if $X$ is Mordellic, then $X$ is highly non-isotrivial. In summary, we have the following easy implication:
\[
X \textrm{ Mordellic} \implies X \textrm{ groupless and highly non-isotrivial}
\]
A special case of the geometric Bombieri--Lang--Vojta conjecture says that the converse of this implication holds, i.e., the two obvious obstructions to being Mordellic are actually the only obstructions:

\begin{conjecture}[Geometric Bombieri--Lang]
If $X$ is groupless and highly non-isotrivial, then $X$ is Mordellic.
\end{conjecture}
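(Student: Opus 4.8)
Since the conjecture is open in general, I will plan a proof only in the case the present paper can reach, where $X$ admits a finite morphism $f \colon X \to A$ to a traceless abelian variety $A$ over $K$, and I will indicate where Parshin's anabelian method replaces the analytic input. First I would dispose of the quantifier over finite extensions: if $L/K$ is finite then $L = \mathbb{C}(B')$ for a finite cover $B' \to B$, and base-changing the model along $B' \to B$ reduces the finiteness of $X(L)$ to that of $X(K)$. A point of $X(K)$ is then the same datum as a section $B \to \mathcal{X}$, so the entire problem becomes the finiteness of the set of such sections.

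The next step is to exploit the abelian variety. Because $A$ is traceless, the Lang--N\'eron theorem makes $\Gamma := A(K)$ a finitely generated abelian group, and since $f$ is finite each fibre $f^{-1}(a)$ has finitely many $K$-points; thus it suffices to bound the image of $X(K)$ in $\Gamma$. I would organize this by induction on $\dim A$ using the geometry of $Y := f(X) \subseteq A$: by the Ueno--Kawamata structure theorem the special locus $\mathrm{Sp}(Y)$ is a proper closed subset, fibred by translates of positive-dimensional abelian subvarieties $C \subsetneq A$, and the preimage in $X$ of each such fibre is again a groupless, highly non-isotrivial variety carrying a finite morphism to the traceless abelian variety $C$ of strictly smaller dimension — so these strata are handled by the inductive hypothesis. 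What remains is the core case, where $Y$ is not covered by its special locus; concretely, the points to be bounded are those of $X(K)$ lying over $Y \setminus \mathrm{Sp}(Y)$, where $Y$ behaves like a variety of general type.

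This core case is exactly where Parshin's method does the work, and for $\dim X = 1$ it is precisely his proof of the geometric Mordell conjecture: there $X$ is a curve of genus $\geq 2$ finite over a non-isotrivial elliptic curve, and finiteness of sections follows from the covering construction. In general the plan is to imitate that construction: to each section $s \colon B \to \mathcal{X}$ one attaches, by a Kodaira--Parshin-type recipe, a finite cover of the model that is ramified only along a fixed divisor and whose degree and discriminant are bounded independently of $s$. Over $\mathbb{C}$ the topological fundamental group of the fixed base is finitely generated, so there are only finitely many such covers up to isomorphism; a rigidity argument then shows that the assignment $s \mapsto (\text{cover})$ has finite fibres. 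Here tracelessness and high non-isotriviality are indispensable: they forbid the covers from moving in a positive-dimensional constant family, and grouplessness rules out a positive-dimensional family of sections, so finitely many covers force finitely many sections.

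I expect the decisive obstacle to be this rigidity-and-boundedness step in the surjective core case: producing a Parshin cover whose invariants are genuinely uniform in $s$, and passing from finiteness of covers back to finiteness of sections, is the hard higher-dimensional analogue of Parshin's curve argument, and it is exactly where the three hypotheses are used rather than assumed for convenience. For the conjecture in full generality the difficulty is of a different, structural kind: with no finite morphism to an abelian variety there is no ambient group, so neither Lang--N\'eron nor the covering trick has anything to act upon, which is why the general statement is still only a conjecture.
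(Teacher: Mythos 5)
Your proposal has a genuine gap at exactly the point you yourself flag as the ``decisive obstacle'': the core case is never proved, and the method you propose for it is not available. The Kodaira--Parshin recipe --- attaching to each section a cover of bounded degree and discriminant ramified along a fixed divisor, then invoking finiteness of covers plus a rigidity argument --- is Parshin's \emph{older} construction for curves, and no higher-dimensional analogue with the uniformity and rigidity you need is known; this is precisely why the conjecture was open. The ``Parshin method'' the paper actually uses is a different one: it attaches to a section $\sigma \in \mathcal{X}(B)$ not a cover but the conjugacy class of the induced section of $\pi_1(f^{\an})$, i.e.\ the Kummer map $\Kum_{f^{\an}}$. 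Finiteness of $\mathcal{X}(B)$ then splits into two halves: (a) the image of the Kummer map is finite, proved via Kobayashi hyperbolicity of the fibres (Lemma \ref{lemma:parshin_topological} and Theorem \ref{theorem:hyperbolic_families}), where the hyperbolicity itself is supplied by Yamanoi's and Kawamata's theorems applied to the groupless variety $X$ finite over an abelian variety (Theorem \ref{thm:Yamanoi}), together with Brody openness to handle all but countably many fibres of the model; and (b) the fibres of $\Kum_{g^{\an}}$ restricted to the algebraic sections $\mathcal{A}(B)$ are finite, which is Parshin's consequence of Lang--N\'eron (Theorem \ref{thm:ln}). Your use of Lang--N\'eron extracts only finite generation of $\Gamma = A(K)$, and finite generation provides no mechanism for concluding that the image of $X(K)$ in $\Gamma$ is finite; the statement actually needed is the strictly stronger Theorem \ref{thm:ln}, and it is played off against the finiteness of the image of the Kummer map, which your plan never establishes because it never brings in hyperbolicity of the fibres at all.

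A secondary gap: your induction over the strata of $\mathrm{Sp}(Y)$ for $Y = f(X)$ does not close. The translates of positive-dimensional abelian subvarieties inside $Y$ can move in positive-dimensional families, so ``finitely many points on each stratum'' does not sum to finiteness of points over $\mathrm{Sp}(Y)$ without a further descent argument (this is one of the genuine difficulties in Xie--Yuan's general theorem, where $\mathrm{Sp}(X)$ is nonempty). In the special case treated here the stratification is unnecessary: it is the grouplessness of $X$ itself, not of its image $Y$, that feeds into Theorem \ref{thm:Yamanoi}, so all of $X$ is hyperbolic and no decomposition of $Y$ is ever made.
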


This was previously known for curves \cite{ManinMordell, Grauert}, closed subvarieties of abelian varieties \cite{BuiumMordell, RaynaudMordell}, smooth projective varieties with ample cotangent bundle \cite{NoguchiMordell, MartinDeschamps}, and hyperbolic normal complex spaces with a hyperbolic embedding condition on the compactification \cite{Nogg85, Nogg92} (we stress that the hyperbolic embedding condition in \emph{loc. cit.} is automatically satisfied in the case of curves, so that this gives an alternative proof of the geometric Bombieri--Lang conjecture in this case). It was resolved by Xie--Yuan for $X$ assuming $X$ maps finitely to an abelian variety \cite{XieYuan1, XieYuan2}.

\subsection{The work of Xie--Yuan}
Suppose that $X$ is groupless and highly non-isotrivial, and suppose that there is a finite morphism from $X$ to an abelian variety $A$ over $K$. Let us explain the key ideas of Xie--Yuan for proving that the set $X(K)$ is finite (hence that $X$ is Mordellic).

A natural strategy for proving the finiteness of $K$-points on a groupless highly non-isotrivial projective variety is by establishing a height bound. Let $\mathcal{L}$ be an ample line bundle on $\mathcal{X}$. Assume that there is a constant $c$ depending only on $\mathcal{X} \to B$ and $\mathcal{L}$ such that, for every $\sigma\in \mathcal{X}(B) = X(K)$, the height $\deg \sigma^\ast \mathcal{L}$ of $\sigma$ (with respect to $\mathcal{L}$) is at most $c$. To now deduce the finiteness of $X(K)$, Xie--Yuan invoke the work of Ueno (for closed subvarieties of abelian varieties) and Kawamata (for varieties with a finite morphism to an abelian variety) in \cite{KawamataChar,Ueno} to deduce from the grouplessness of $X$ that $X$ is of general type. Then, following arguments of Noguchi \cite{Nogg85}, the finiteness of $X(K)$ is deduced from the existence of a height bound. (The gist of the argument is contained in \cite[Theorem~2.8]{XieYuan1}.) Thus, this (essentially) reduces the geometric Bombieri--Lang conjecture to proving a height bound (in this setting).

One of Xie and Yuan's novel insights is that Brody's reparametrization lemma implies a bound on the ``partial height'' of a $K$-point (defined by taking the ``degree'' of $\sigma^\ast \mathcal{L}$ on some open disk contained in $B^{\an}$), assuming $\mathcal{X} \to B $ has hyperbolic fibres over a closed disk inside $B^{\an}$; see \cite[Theorem~2.7]{XieYuan1}. The latter result would suffice to prove the above geometric Bombieri--Lang conjecture if we knew how to relate the partial height and (standard) height. This is however not known in general, and is formulated as the ``non-degeneracy conjecture'' \cite[Conjecture~2.3]{XieYuan1}. In the case that $X$ admits a finite morphism to a ``traceless'' abelian variety, this conjecture can be proven using properties of the canonical N\'eron-Tate height on an abelian variety; see \cite[Theorem~3.6]{XieYuan1}. This then leads to one of the main results in Xie--Yuan's work:

\begin{theorem}[Xie--Yuan, special case] \label{thm:xy}
Let $B$ be a smooth curve over $\mathbb{C}$ with function field $K=K(B)$. Let $X$ be a geometrically integral groupless projective variety over $K$ such that there is an abelian variety $A$ over $\overline{K}$ of $\overline{K}/\mathbb{C}$-trace zero and a finite morphism $X_{\overline{K}}\to A$. Then, for every finite field extension $L/K$, the set $X(L)$ is finite.
\end{theorem}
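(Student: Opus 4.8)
The plan is to combine the general-type dichotomy recalled above with Parshin's covering method, using the traceless hypothesis to guarantee that the families of covers produced are non-isotrivial.

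First I would reduce to a finiteness statement over $K$ itself. Since the $\overline{K}/\mathbb{C}$-trace and grouplessness are insensitive to finite base change (the latter by characterization (ii) above), I may descend the abelian variety $A$ and the finite morphism $X_{\overline{K}} \to A$ to a finite extension and, replacing $B$ by its normalization in that extension and $K$ by the new function field, it suffices to prove that $X(K)$ is finite for a groupless $X$ defined over $K$ admitting a finite morphism $\pi\colon X \to A$ to a $\overline{K}/\mathbb{C}$-traceless abelian variety over $K$. By the results of Ueno and Kawamata cited above, grouplessness together with the finite morphism to $A$ forces $X$ to be of general type. Spreading out, I fix a projective model $f\colon \mathcal{X} \to B$ and a model $\mathcal{A} \to B$ of $A$ which is an abelian scheme over a dense open $U \subseteq B$, shrinking $U$ so that the fibres of $f$ are of general type. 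A point of $X(K)$ is then the same datum as a section of $f$ over $B$ (using properness to extend over $B \setminus U$), so the theorem becomes a finiteness statement about sections.

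Next comes the heart of Parshin's method: attaching to each section a covering with controlled invariants. To each $\sigma \in X(K)$ I would associate, through the point $\pi\circ\sigma$ and the isogenies $[n]\colon A \to A$, a Kodaira--Parshin-type family $g_\sigma\colon \mathcal{Y}_\sigma \to B'_\sigma$ of projective varieties --- for instance the pullback of $\mathcal{X}$ along a cover of $\mathcal{A}$ branched along the section, or a suitable fibre product with $[n]$ --- arranged so that the fibres are again of general type and so that the discrete invariants (dimension, Hilbert polynomial of the canonical model, and the finite bad-reduction locus) are bounded \emph{independently of} $\sigma$. The field of definition of such a cover is controlled by $A(K)/nA(K)$, which is finite because $A$ is traceless: by the Lang--N\'eron theorem $A(K)$ is finitely generated. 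Crucially, the traceless hypothesis also ensures that each family $g_\sigma$ is non-isotrivial, since an isotrivial sub-family would yield a morphism from a positive-dimensional constant variety into $X_{\overline{K}}$, contradicting either grouplessness or the vanishing of the trace.

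Finally I would invoke a Shafarevich-type finiteness together with rigidity. Boundedness of the invariants places the isomorphism classes of the $g_\sigma$ inside a single quasi-projective moduli space of canonically polarised varieties with fixed invariants and bad-reduction locus, and hyperbolicity of such moduli (the Arakelov--Parshin and Viehweg--Zuo circle of ideas, in its function-field form) shows that only finitely many non-isotrivial families occur; thus $\sigma \mapsto g_\sigma$ takes finitely many values. It then remains to check that this assignment has finite fibres: two sections producing isomorphic covers differ by an automorphism-type datum, and since the fibres of $f$ are of general type and $X$ is groupless there is no positive-dimensional family of sections, so each cover is hit by only finitely many $\sigma$. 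Combining the two finiteness statements gives that $X(K)$ is finite, and unwinding the initial reduction yields finiteness of $X(L)$ for every finite $L/K$. The main obstacle is the boundedness step in higher dimension --- producing $\mathcal{Y}_\sigma$ with invariants and bad reduction genuinely uniform in $\sigma$, and having at hand the corresponding Shafarevich-type finiteness over $B$ --- together with the verification that tracelessness upgrades to honest non-isotriviality of every $\mathcal{Y}_\sigma$; this last point is exactly where the $\overline{K}/\mathbb{C}$-trace hypothesis (rather than mere grouplessness of $X$) is indispensable, and it is the analogue, within Parshin's anabelian framework, of the non-degeneracy input that Xie--Yuan extract from the N\'eron--Tate height.
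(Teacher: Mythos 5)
Your proposal is not the method of the paper, and as it stands it has genuine gaps that go beyond the ones you flag. The paper uses Parshin's \emph{anabelian} method: to a section $\sigma \in \mathcal{X}(B)$ one attaches the conjugacy class of the induced section of $\pi_1(\mathcal{X}^{\an}) \to \pi_1(B^{\an})$ (the Kummer map); finiteness of the \emph{image} of this map is a purely topological consequence of Kobayashi hyperbolicity of the fibres (supplied by Yamanoi--Kawamata, since $X$ is groupless with a finite map to an abelian variety, together with Brody's openness to handle bad fibres), and finiteness of its \emph{fibres} is transferred from the abelian scheme, where it is exactly Parshin's consequence of the Lang--N\'eron theorem for traceless abelian varieties. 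Your proposal instead follows the other, older Parshin strategy (Kodaira--Parshin covers plus a Shafarevich-type finiteness on moduli), which is a legitimately different route in principle, but two of its pillars are not available. First, the construction of the families $g_\sigma \colon \mathcal{Y}_\sigma \to B'_\sigma$ with invariants (Hilbert polynomial, degeneracy locus) uniform in $\sigma$ is not a known device in higher dimension: the classical Kodaira--Parshin construction is specific to points on curves, and neither ``pullback along a cover of $\mathcal{A}$ branched along the section'' nor ``fibre product with $[n]$'' produces, as written, a family whose isomorphism class both varies with $\sigma$ and has bounded discrete data ($[n]$ is \'etale in characteristic zero, so the fibre product does not see $\sigma$ at all without a translation/descent argument you would need to spell out). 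This is not a technical ``obstacle'' to be deferred; it is the entire content of the approach.

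Second, and more seriously, the Shafarevich-type finiteness you invoke is not a theorem for higher-dimensional fibres. The Viehweg--Zuo circle of ideas gives hyperbolicity of moduli stacks of canonically polarized manifolds, and boundedness results exist (Kov\'acs--Lieblich), but passing from these to \emph{finiteness} of non-isotrivial families over a fixed base with fixed degeneracy locus requires Arakelov--Parshin rigidity, which is known to fail for higher-dimensional canonically polarized fibres: non-rigid, non-isotrivial families exist, so the set of isomorphism classes of such families can be infinite. Hence the step ``only finitely many non-isotrivial families occur'' cannot be cited as known, and without it the argument collapses. (Your final fibre-finiteness step --- recovering $\sigma$ from $g_\sigma$ up to finite ambiguity --- is also only asserted; in the curve case it comes from the cover determining its branch divisor, and you give no substitute.) By contrast, note how the paper's route avoids all moduli-theoretic input: the only finiteness theorems it consumes are the compactness of Kobayashi balls in complete hyperbolic spaces (Lemma \ref{lemma:parshin_topological}) and Lang--N\'eron, which is also where the traceless hypothesis enters --- not, as in your sketch, through non-isotriviality of auxiliary families, but through finiteness of the fibres of the Kummer map $\mathcal{A}(B) \to \mathrm{ConSec}(\pi_1(g^{\an}))$.
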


The assumption that $A$ has $\overline{K}/\mathbb{C}$-trace zero means that for every abelian variety $A_0$ over $\mathbb{C}$, every morphism $A_{0,\overline{K}}\to A$ of abelian varieties over $\overline{K}$ is trivial (see \cite{Conrad} for a precise definition). In particular, by standard properties of the Albanese variety, the second assumption in Theorem \ref{thm:xy} implies that $X$ is highly non-isotrivial. Thus, Theorem \ref{thm:xy} is indeed a special case of the geometric Bombieri--Lang conjecture.

We note that the following improvements can be made in Theorem \ref{thm:xy} (see, for example, \cite[\S 4.2]{XieYuan1}):
\begin{enumerate}
\item One may replace $\mathbb{C}$ by an algebraically closed field $k$ of characteristic zero.
\item One may replace $B$ by any integral variety over $k$.
\item One may replace ``finite field extension $L/K$'' by ``finitely generated field extension $L/K$''.
\end{enumerate}
 
In this note, we only prove the geometric Bombieri--Lang conjecture using Parshin's method for certain projective varieties with an \emph{empty} special locus (i.e., Theorem \ref{thm:xy}). It is most noteworthy that Xie--Yuan resolve the geometric Bombieri--Lang conjecture in \emph{its entirety} for projective varieties whose Albanese map is generically finite onto its image. For example, their work leads to the following result \cite[Theorem~1.1]{XieYuan2}.

\begin{theorem}[Xie--Yuan]
Let $B$ be a smooth curve over $\mathbb{C}$ with function field $K=K(B)$. Let $X$ be a geometrically integral projective variety over $K$. Suppose that there is an abelian variety $A$ over $\overline{K}$ of $\overline{K}/\mathbb{C}$-trace zero and a finite morphism $X_{\overline{K}}\to A$. 	
Then, for every finite field extension $L/K$, the set $X(L)\setminus \mathrm{Sp}(X_{\overline{K}})$ is finite.
\end{theorem}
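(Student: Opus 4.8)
The plan is to deduce the general statement from the groupless case, Theorem~\ref{thm:xy}, by induction on $n=\dim X$, the inductive hypothesis being that the theorem holds for every geometrically integral projective variety of dimension $<n$ admitting a finite morphism to a trace-zero abelian variety over $\overline{K}$. First I would reduce to the situation in which $A$ and the finite morphism $g\colon X_{\overline{K}}\to A$ are already defined over $L$: they descend to some finite extension $L'/L$, and since $X(L)\setminus \mathrm{Sp}(X_{\overline{K}})\subseteq X(L')\setminus \mathrm{Sp}(X_{\overline{K}})$ it suffices to treat $L'$, the trace-zero hypothesis being insensitive to this base change because $\overline{L}=\overline{K}$. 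Write $Y=g(X_{\overline{K}})\subseteq A$ for the image, a closed subvariety of a trace-zero abelian variety, hence highly non-isotrivial. The geometric Bombieri--Lang conjecture is known for closed subvarieties of abelian varieties (this is essentially the Mordell--Lang theorem over function fields, using that $A(L)$ is finitely generated by Lang--N\'eron) \cite{RaynaudMordell, BuiumMordell}, so $Y(L)\setminus \mathrm{Sp}(Y_{\overline{K}})$ is finite. As $g$ is finite, the preimage of this finite set is finite, and it remains to bound the points $x$ with $g(x)\in \mathrm{Sp}(Y_{\overline{K}})$.

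By the structure theory of Ueno and Kawamata \cite{Ueno, KawamataChar}, $\mathrm{Sp}(Y_{\overline{K}})$ is a finite union $\bigcup_j (d_j+C_j)$ of translates of positive-dimensional abelian subvarieties $C_j\subseteq A$ contained in $Y$. For each $j$ set $X_j=g^{-1}(d_j+C_j)$, a closed subvariety of $X$ that is finite over the trace-zero abelian variety $d_j+C_j$. If $\dim C_j<\dim Y=n$, then $\dim X_j<n$, and the inductive hypothesis gives that $X_j(L)\setminus \mathrm{Sp}((X_j)_{\overline{K}})$ is finite; since $X_j\subseteq X$ we have $\mathrm{Sp}((X_j)_{\overline{K}})\subseteq \mathrm{Sp}(X_{\overline{K}})$, so these points contribute only finitely to $X(L)\setminus \mathrm{Sp}(X_{\overline{K}})$. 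The only term not covered this way is one with $\dim C_j=n$, which forces $Y=d_j+C_j$ to be a translate of an abelian subvariety; after translating, this is the key case of a finite morphism $g\colon X\to A'$ onto a trace-zero abelian variety $A'$.

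In this key case, if $X$ is groupless then $\mathrm{Sp}(X_{\overline{K}})=\emptyset$ and $X(L)$ is finite by Theorem~\ref{thm:xy}. If $X$ is not groupless, then by Kawamata's characterization $X$ is not of general type, and I would invoke Kawamata's fibration theorem for varieties generically finite over an abelian variety \cite{KawamataChar} (after replacing $X$ by a resolution, which alters $X$ and $\mathrm{Sp}$ only in a lower-dimensional locus absorbed by the induction): there is a fibration $p\colon X\to X_1$ with $X_1$ of general type and finite over the trace-zero abelian variety $A'/A_1$, whose generic fibre is birational to an abelian variety and hence contained in $\mathrm{Sp}(X_{\overline{K}})$. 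Thus over a dense open $U\subseteq X_1$ one has $p^{-1}(U)\subseteq \mathrm{Sp}(X_{\overline{K}})$, so that $X\setminus \mathrm{Sp}(X_{\overline{K}})\subseteq p^{-1}(D)$ for the proper closed complement $D=X_1\setminus U$. Since $X$ is irreducible and $p$ is surjective, $p^{-1}(D)$ is a proper closed subvariety, so $\dim p^{-1}(D)<n$, and it admits a finite morphism to $A'$; the inductive hypothesis then yields the required finiteness. The main obstacle is precisely this key case: extracting from \cite{KawamataChar} a fibration whose generic fibres are birational to abelian varieties, and controlling the birational modifications so that the interplay between $\mathrm{Sp}$, the rational points, and the dimension induction stays clean. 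Everything else is either the cited input for subvarieties of abelian varieties or the routine bookkeeping of the induction.
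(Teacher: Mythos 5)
Your proposal has a genuine gap, and it sits exactly at the hard case. Two of your claims are false. First, the Ueno--Kawamata structure theorem does not say that $\mathrm{Sp}(Y_{\overline{K}})$ is a finite union of translates $d_j+C_j$ of abelian subvarieties: it says it is a finite union of closed subsets of the form $Z_j+C_j$, where $C_j\subseteq A$ is a positive-dimensional abelian subvariety and $Z_j+C_j\subseteq Y$ is a closed $C_j$-invariant subvariety. (Take $Y=E\times C\subseteq E\times \Jac(C)$ with $E$ elliptic and $C$ of genus two: then $\mathrm{Sp}(Y)=Y$, which is not a finite union of translates of abelian subvarieties.) Consequently, your dichotomy ``either every term has dimension $<n$, or $Y$ is a translate of an abelian subvariety'' is wrong; in the top-dimensional case one can only conclude that $Y$ is invariant under some positive-dimensional abelian subvariety, and then $g^{-1}(\mathrm{Sp}(Y_{\overline{K}}))=X_{\overline{K}}$, so your induction produces no dimension drop. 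Second, in your ``key case'' you assert that if $X$ is not groupless then, by Kawamata's characterization, $X$ is not of general type. The characterization (as quoted in the proof of Theorem \ref{thm:Yamanoi}) is that a variety of maximal Albanese dimension is of general type if and only if $\mathrm{Sp}\neq X$, whereas groupless means $\mathrm{Sp}=\emptyset$. The correct trichotomy is: $\mathrm{Sp}(X_{\overline{K}})=X_{\overline{K}}$ (nothing to prove, and this is the only place the Ueno/Kawamata fibration is available); $\mathrm{Sp}(X_{\overline{K}})=\emptyset$ (Theorem \ref{thm:xy}); and $\emptyset\neq\mathrm{Sp}(X_{\overline{K}})\subsetneq X_{\overline{K}}$. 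Your argument never treats the third case: Theorem \ref{thm:xy} does not apply (not groupless), the Iitaka/Ueno fibration is trivial (general type), and there is no smaller-dimensional variety to feed the induction.

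That third, ``pseudo'' case is not bookkeeping; it is the actual content of Xie--Yuan's theorem beyond Theorem \ref{thm:xy}. For instance, take $g\colon X\to A$ finite and \emph{surjective} onto a trace-zero abelian variety, with $X$ of general type containing a single elliptic curve: then $\mathrm{Sp}(g(X_{\overline{K}}))=A$, so Mordell--Lang on the image \cite{RaynaudMordell,BuiumMordell} gives nothing, $X$ is not embedded in an abelian variety, and no stratum of $\mathrm{Sp}$ has smaller preimage. Note also that the paper does not prove this statement --- it quotes it from \cite{XieYuan2} and explicitly remarks that obtaining it via Parshin's method ``would require some further developments in our understanding of pseudo-hyperbolic varieties'': in the pseudo case the fibres of a model are only Brody hyperbolic modulo $\mathrm{Sp}$, so the Kobayashi-metric finiteness arguments (Lemma \ref{lemma:parshin_topological}, Theorem \ref{theorem:hyperbolic_families}) break down. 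So a formal induction from the groupless case is precisely what is not expected to work. Your steps outside this case --- descent of $g$ to a finite extension, Mordell--Lang for the image, induction over the strata $Z_j+C_j\subsetneq Y$, preservation of trace-zero under abelian subvarieties and quotients --- are sound, but the core difficulty remains untouched.
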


Applying Parshin's method to reprove this result would require some further developments in our understanding of pseudo-hyperbolic varieties (i.e., varieties which are hyperbolic modulo a proper closed subset).

\section{Groupless and hyperbolic}
If $k$ is a field, then a variety over $k$ is a finite type separated geometrically integral scheme over $k$. If $X$ is a locally finite type scheme over $\mathbb{C}$, then we let $X^{\an}$ denote its associated complex-analytic space \cite[Expos\'e~XII]{SGA1}.

Let us start with the following result on the behaviour of grouplessness in fibres.

\begin{theorem} \label{thm:groupless_families}
Let $k$ be an algebraically closed field of characteristic zero. Let $\mathcal{X} \to B$ be a proper surjective morphism of varieties over $k$ where $B$ is a smooth curve. Then, the set of $b$ in $B$ such that $\mathcal{X}_b$ is not groupless is a countable union of closed subschemes of $B$.
\end{theorem}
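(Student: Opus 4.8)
The plan is to exhibit the locus $\mathrm{Bad} = \{ b \in B : \mathcal{X}_b \text{ is not groupless}\}$ as a countable union of closed subsets by parametrizing witnesses to non-grouplessness. Fix a relatively very ample line bundle on $\mathcal{X}\to B$, embedding $\mathcal{X}\hookrightarrow \mathbb{P}^N_B$. A fibre $\mathcal{X}_b$ fails to be groupless precisely when it receives a nonconstant morphism from some abelian variety $A$, and after dividing by the connected stabilizer of the morphism one may assume it is generically finite onto its image. I would organize such pairs $(A, f\colon A\to\mathcal{X}_b)$ by the countably many discrete numerical invariants: the dimension $r$ of $A$, the type of a polarization $\theta$ on $A$, and the Hilbert polynomial of the graph $\Gamma_f\subseteq A\times\mathcal{X}_b$ with respect to the fixed polarizations. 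For each such datum $\alpha$, the polarized abelian varieties of that type, embedded via $|3\theta|$ inside a fixed projective space, form a finite-type moduli space $\mathcal{A}_\alpha$ carrying a universal family; over $B\times\mathcal{A}_\alpha$ one forms the relative Hom scheme and extracts the finite-type subscheme $\mathcal{H}_\alpha$ parametrizing nonconstant morphisms whose graph has the Hilbert polynomial prescribed by $\alpha$. The image $C_\alpha\subseteq B$ of $\mathcal{H}_\alpha\to B$ is then constructible by Chevalley, and $\mathrm{Bad}=\bigcup_\alpha C_\alpha$ is a countable union of constructible subsets of $B$.

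Since $B$ is a curve, each $C_\alpha$ is either finite, in which case it is already closed and contributes an admissible piece, or dense, in which case it contains the generic point $\eta$ of $B$. The essential point is that a countable union of constructible subsets of $B$ need not be a countable union of closed subsets: over an uncountable $k$, a cofinite-but-not-full subset of $B$ is neither. Hence the theorem forces, and the proof must establish, the following claim completing the dichotomy: \emph{if the generic fibre $\mathcal{X}_\eta$ is not groupless, then every fibre $\mathcal{X}_b$ is not groupless}, so that a dense $C_\alpha$ in fact equals all of $B$. Granting this, $\mathrm{Bad}$ is either all of $B$ (if some $C_\alpha$ is dense) or a countable union of finite sets, and in both cases a countable union of closed subschemes.

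To prove the claim I would spread out. Non-grouplessness of $\mathcal{X}_\eta$ gives, after replacing $B$ by a finite cover, an abelian scheme $\mathcal{A}\to U$ over a dense open $U\subseteq B$ together with a $U$-morphism $F\colon \mathcal{A}\to\mathcal{X}_U$ whose restriction to every fibre over $U$ is nonconstant (shrinking $U$); for $b\in U$ this already shows $\mathcal{X}_b$ is not groupless. For the finitely many $b_0\in B\setminus U$, apply semistable reduction after a further finite base change $B'\to B$: then $\mathcal{A}$ extends to a semiabelian scheme $\mathcal{G}\to B'$ with $\mathcal{G}_{b_0}$ an extension of an abelian variety by a torus, and $F$ extends to a rational map $\mathcal{G}\ratmap\mathcal{X}_{B'}$ defined away from a subset of codimension at least two, as $\mathcal{G}$ is smooth and $\mathcal{X}_{B'}$ is proper. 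Taking the closure $\mathcal{Y}$ of $F(\mathcal{A})$ in $\mathcal{X}_{B'}$, the morphism $\mathcal{Y}\to B'$ is flat with positive-dimensional fibres, so $\mathcal{Y}_{b_0}$ is positive-dimensional, and it is dominated by the special fibre of a relatively complete (Mumford) model of $\mathcal{G}$, whose components are fibred in tori and abelian varieties. The induced limiting map is therefore nonconstant on some such component, yielding a nonconstant rational map to $\mathcal{X}_{b_0}$ from a connected algebraic group (a torus, an abelian variety, or the semiabelian $\mathcal{G}_{b_0}$ itself). By the third characterization of grouplessness recalled above, $\mathcal{X}_{b_0}$ is not groupless, and since grouplessness is geometric it suffices to treat $\mathcal{X}_{B'}\to B'$.

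The main obstacle is this last step: controlling the degeneration of the abelian scheme at the bad points and ensuring the limiting map does not collapse. When the reduction is partly toric, the image $\mathcal{Y}_{b_0}$ may be built from ``escaping'' directions not seen on the semiabelian special fibre $\mathcal{G}_{b_0}$ itself, so one genuinely needs a relatively complete model to realize these limit points as images of a toric or abelian boundary stratum. The payoff is that \emph{any} nonconstant rational map from \emph{any} connected algebraic group suffices to witness non-grouplessness, which is exactly what characterization (iii) provides; this is why the degeneration of a proper family of abelian-variety images cannot land on a groupless fibre. I would isolate this degeneration analysis as the technical core, with the parametrization and the curve dichotomy as comparatively formal bookkeeping.
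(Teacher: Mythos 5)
Your Hom-scheme parametrization and the curve dichotomy are sound and match the ``standard Hom-scheme argument'' that the paper refers to for this statement, and your structural observation is correct and important: since constructible subsets of a curve are either finite or contain the generic point, the theorem is equivalent to the Hom-scheme output \emph{plus} the specialization claim that non-grouplessness of $\mathcal{X}_\eta$ forces non-grouplessness of every fibre (over $k=\overline{\QQ}$, say, the theorem asserts nothing beyond that claim). The paper outsources exactly this point to its references; your attempt to prove it algebraically is where there is a genuine gap. The gap is the sentence asserting that $\mathcal{Y}_{b_0}$ ``is dominated by the special fibre of a relatively complete (Mumford) model of $\mathcal{G}$'', so that ``the induced limiting map is therefore nonconstant on some such component''. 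The limit map $\phi\colon P\ratmap\mathcal{X}_{B'}$ from a relatively complete model $P$ is only a rational map, with indeterminacy inside $P_{b_0}$; what dominates $\mathcal{Y}_{b_0}$ is the special fibre of a resolution $P'\to P$ of that indeterminacy, not $P_{b_0}$ itself, and $P'_{b_0}$ contains, besides strict transforms of the toric-bundle strata, exceptional components, each rationally fibred over (a modification of) some closed centre $Z\subseteq P_{b_0}$. A map from such a component to $\mathcal{X}_{b_0}$ is constant on its rational fibres and hence factors merely through $Z$, which is an arbitrary subvariety of a stratum (possibly of general type), so its nonconstancy contradicts nothing. In fact the domination claim is false as a general statement about degenerations, already with good reduction: take $\mathcal{G}=E\times_k\Spec R$ a constant elliptic curve, let $h\colon E\to\PP^1$ be nonconstant, and let $g\colon E_F\to\PP^1_F$ be $h$ followed by the automorphism of $\PP^1_F$ scaling the affine coordinate by a uniformizer $q$. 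Then $g$ is nonconstant and the closure of its image in $\PP^1_R$ has special fibre all of $\PP^1_k$, yet the induced rational map $E_R\ratmap\PP^1_R$ restricts to the constant map $0$ on $E_k$ (it is undefined exactly at the poles of $h$): the entire limit is produced by bubbling at the indeterminacy points, not by the special fibre of the (here already relatively complete) model. Of course $\PP^1_k$ is not groupless, but nothing in your argument up to that step invokes grouplessness of $\mathcal{X}_{b_0}$, so the step cannot be correct as stated.

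The missing idea is to use grouplessness of $\mathcal{X}_{b_0}$ \emph{before} passing to the limit, i.e.\ to argue by contradiction. If $\mathcal{X}_{b_0}$ is groupless, then it contains no rational curves at all: a nonconstant map $\PP^1\to\mathcal{X}_{b_0}$ restricts to a nonconstant map from $\Gm$, contradicting the third characterization of grouplessness recalled in the introduction. Consequently, every rational map from a regular scheme to $\mathcal{X}_{B'}$ whose indeterminacy lies over $b_0$ is automatically a morphism, because the fibres of a resolution of indeterminacy (a composite of blow-ups in smooth centres) are rationally chain connected and would have to map to rational curves inside $\mathcal{X}_{b_0}$, hence are contracted. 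Choosing the relatively complete model $P$ to be regular with toric-bundle strata (possible by taking a smooth polyhedral decomposition in Mumford's construction, cf.\ K\"unnemann's projective regular models), the limit map then becomes an honest morphism $P\to\mathcal{X}_{B'}$; its image is closed and contains $\mathcal{Y}$, so $\phi(P_{b_0})\supseteq\mathcal{Y}_{b_0}$ is positive-dimensional, and your stratum analysis (nonconstant on a torus fibre, or else descending to a nonconstant map from the abelian part) now closes the contradiction. Alternatively, one can simply cite the specialization theorem for grouplessness of Javanpeykar--Vezzani, proved there by non-archimedean uniformization, which is in effect what the paper's citation does; without one of these inputs, the ``therefore'' in your key sentence is unsupported.
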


This result is well-known and can be proved using a standard Hom-scheme argument already alluded to by Parshin in his proof of \cite[Proposition 2]{ParshinLang}; see also the proof of \cite[Theorem~1.6]{JVez}. For the sake of completeness, we note that the assumption that $B$ is a curve can be removed; this requires the additional \cite[Theorem~1.5]{JVez}, as explained in \cite[\S1.2]{vBJK}.

\begin{definition}
Let $X$ be a complex-analytic space. We follow \cite{KobayashiBook} and say that $X$ is \emph{hyperbolic} (resp. \emph{complete hyperbolic}) if the Kobayashi pseudometric $d_X$ is a metric (resp. complete metric) on $X$. A variety $X$ over $\mathbb{C}$ is \emph{hyperbolic} (resp. \emph{complete hyperbolic}) if $X^{\an}$ is hyperbolic (resp. complete hyperbolic).  
\end{definition}

\begin{remark}\label{remark:RS_is_complete} 
Let $X$ be a connected hyperbolic Riemann surface. Let us explain why $X$ is complete hyperbolic. First, by \cite[Theorem~3.2.1]{KobayashiBook}, the Kobayashi metric induces the topology of $X$. Thus, if $X$ is a compact Riemann surface, the complete hyperbolicity of $X$ follows from the fact that a compact metric space is complete. To deal with the case that $X$ is not compact, we note that by \cite[Theorem~3.2.8]{KobayashiBook}, a complex-analytic space is complete hyperbolic if and only if its universal covering is so. As all hyperbolic Riemann surfaces have the open unit disk $\mathbb{D}$ as their universal covering, the claim follows.
\end{remark}

The fact that hyperbolicity is Euclidean-open in proper families of complex spaces is a well-known result due to Brody; see \cite[Theorem~3.11.1]{KobayashiBook} (or the original \cite{Brody}).

\begin{lemma}[Brody] \label{lemma:open}
Let $\mathcal{X} \to B$ be a proper surjective morphism of connected complex-analytic spaces. If $b \in B$ is a point such that the fibre $\mathcal{X}_b$ is hyperbolic, then there is a Euclidean-open neighbourhood $U \subseteq B$ of $b$ such that, for every $u$ in $U$, the fibre $\mathcal{X}_{u}$ is hyperbolic. 
\end{lemma}

The following result was proven by Parshin \cite[Proposition~2]{ParshinLang} for subvarieties of abelian varieties. Parshin's line of reasoning can be adapted to prove the following more general result.

\begin{corollary}[Parshin + $\varepsilon$] \label{cor:parshin_plus_epsilon}
Let $\mathcal{X}\to B$ be a proper surjective morphism of varieties over $\mathbb{C}$ with $B$ a smooth projective curve. Suppose that there is a point $b_0$ in $B(\mathbb{C})$ such that $\mathcal{X}_{b_0}$ is groupless, and a dense open subset $U \subseteq B$ such that the set of $b$ in $U(\mathbb{C})$ with $\mathcal{X}_b$ groupless equals the set of $b$ in $U(\mathbb{C})$ with $\mathcal{X}_b$ hyperbolic. Then, the set of $b$ in $B(\mathbb{C})$ such that $\mathcal{X}_b$ is not hyperbolic is a Euclidean-closed countable subset of $B^{\an}$. 
\end{corollary}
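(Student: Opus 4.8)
The plan is to separate the two assertions: Euclidean-closedness will follow immediately from Brody's openness lemma, while countability will follow from the family version of grouplessness. Throughout, write $N_h = \{b \in B(\mathbb{C}) : \mathcal{X}_b \text{ is not hyperbolic}\}$ and $N_g = \{b \in B(\mathbb{C}) : \mathcal{X}_b \text{ is not groupless}\}$. Since $\mathcal{X}$ and $B$ are varieties over $\mathbb{C}$, their analytifications $\mathcal{X}^{\an}$ and $B^{\an}$ are connected, and the analytification of $\mathcal{X} \to B$ is a proper surjective morphism of connected complex-analytic spaces. Applying Lemma~\ref{lemma:open}, the hyperbolic locus $B(\mathbb{C}) \setminus N_h$ is Euclidean-open, so $N_h$ is Euclidean-closed. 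It therefore remains only to show that $N_h$ is countable.

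First I would prove that $N_g$ is countable. By Theorem~\ref{thm:groupless_families}, the set $N_g$ is a countable union of closed subschemes $Z_i \subseteq B$. Because $\mathcal{X}_{b_0}$ is groupless we have $b_0 \notin N_g$, so no $Z_i$ equals $B$; as $B$ is an irreducible curve, each proper closed subscheme $Z_i$ is a finite set of points. Hence $N_g$ is a countable union of finite sets, and is therefore countable.

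To pass from $N_g$ to $N_h$, let $S = B(\mathbb{C}) \setminus U(\mathbb{C})$, which is finite since $U$ is a dense open subset of the curve $B$. Write $N_h = (N_h \cap U(\mathbb{C})) \cup (N_h \cap S)$. The hypothesis that grouplessness and hyperbolicity cut out the same locus on $U$ gives, upon taking complements inside $U(\mathbb{C})$, the equality $N_h \cap U(\mathbb{C}) = N_g \cap U(\mathbb{C})$, which is contained in the countable set $N_g$. As $N_h \cap S$ is finite, we conclude that $N_h$ is countable, which together with the first paragraph proves the corollary.

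I expect the only subtle point to be the role of the base point $b_0$: it is exactly what prevents one of the subschemes $Z_i$ furnished by Theorem~\ref{thm:groupless_families} from being all of $B$, which would otherwise allow $N_g$ to be uncountable. Note also that the argument never needs the general implication that a hyperbolic proper variety is groupless; the equivalence of the two loci is used only over $U$, and the finitely many fibres over $S = B \setminus U$ are dealt with trivially because $S$ is finite.
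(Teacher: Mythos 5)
Your proof is correct and follows essentially the same route as the paper: countability of the non-groupless locus via Theorem~\ref{thm:groupless_families} together with the point $b_0$ forcing each closed piece to be finite, transfer to the non-hyperbolic locus via the hypothesis on $U$, and Euclidean-closedness via Lemma~\ref{lemma:open}. Your treatment of the finitely many points of $B(\mathbb{C}) \setminus U(\mathbb{C})$ is just a more explicit rendering of what the paper compresses into ``by the defining properties of $U$.''
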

\begin{proof}
By Theorem \ref{thm:groupless_families}, the set of $b$ in $B(\mathbb{C})$ with $\mathcal{X}_b$ not groupless is a countable union of Zariski-closed subsets of $B(\mathbb{C})$. As it does not contain $b_0$, it is thus countable. Consequently, by the defining properties of $U$, the set of points $b$ in $B(\mathbb{C})$ with $\mathcal{X}_b$ not hyperbolic is countable as well. We conclude by Lemma \ref{lemma:open}.
\end{proof}

\begin{definition}
Let $Z\subset X$ be a subset of a complex-analytic space $X$. We say that $X$ is \emph{Brody hyperbolic modulo $Z$} if every nonconstant holomorphic map $\mathbb{C}\to X$ lands in $Z$. We say that $X$ is \emph{pseudo-Brody hyperbolic} if there is a proper closed subset $Z \subsetneq X$ such that $X$ is Brody hyperbolic modulo $Z$. We say that $X$ is \emph{Brody hyperbolic} if $X$ is Brody hyperbolic modulo the empty subset.
\end{definition}

We will need the following consequence of Yamanoi's work on varieties of maximal Albanese dimension \cite{YamAb}. 

\begin{theorem}  \label{thm:Yamanoi}
Let $X$ be a projective variety over $\mathbb{C}$. If $X$ is groupless and admits a finite morphism to an abelian variety, then $X$ is hyperbolic.
\end{theorem}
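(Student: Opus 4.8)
The plan is to deduce Kobayashi hyperbolicity from Brody hyperbolicity, and then to rule out nonconstant entire curves by combining Yamanoi's theorem with the grouplessness hypothesis. Since $X$ is projective, $X^{\an}$ is compact, so by Brody's theorem \cite[Theorem~3.6.3]{KobayashiBook} the variety $X$ is hyperbolic if and only if it is Brody hyperbolic; thus it suffices to show that every holomorphic map $g\colon \mathbb{C}\to X^{\an}$ is constant. I would argue by induction on $\dim X$, the case $\dim X=0$ being trivial. The finite morphism $f$ restricts to a finite morphism $X_{\mathrm{sing}}\to A$, and each irreducible component of $X_{\mathrm{sing}}$ is again groupless (a nonconstant rational map from an abelian variety into $X_{\mathrm{sing}}\subseteq X$ would witness the non-grouplessness of $X$), projective, finite over $A$, and of smaller dimension; hence by the induction hypothesis $X_{\mathrm{sing}}$ is hyperbolic, in particular Brody hyperbolic. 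Therefore any nonconstant entire curve $g$ would satisfy $g(\mathbb{C})\not\subseteq X_{\mathrm{sing}}$, and it remains to exclude this.

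To handle such a $g$, I would pass to a resolution of singularities $\pi\colon \widetilde{X}\to X$. The composite $\widetilde{X}\to X\to A$ is generically finite onto its image, so it factors through the Albanese map $\widetilde{X}\to \Alb(\widetilde{X})$, which is therefore generically finite onto its image; that is, $\widetilde{X}$ is a smooth projective variety of maximal Albanese dimension. Since $g$ does not map into $X_{\mathrm{sing}}$, it lifts to a meromorphic map $\widetilde{g}\colon \mathbb{C}\dashrightarrow \widetilde{X}$ with $\pi\circ\widetilde{g}=g$; as $\mathbb{C}$ is a smooth curve this lift has no points of indeterminacy and is therefore holomorphic, and it is nonconstant because $g$ is.

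Now I would invoke Yamanoi's theorem \cite{YamAb}: a smooth projective variety of maximal Albanese dimension is Brody hyperbolic modulo its special locus, so $\widetilde{g}(\mathbb{C})\subseteq \mathrm{Sp}(\widetilde{X})$. The key point is then that $\mathrm{Sp}(\widetilde{X})$ is contained in the exceptional locus of $\pi$: any nonconstant morphism from an abelian variety to $\widetilde{X}$ either remains nonconstant after composing with $\pi$, which is impossible since $X$ is groupless and hence $\mathrm{Sp}(X)=\emptyset$, or becomes constant, in which case its image lies in a fibre of $\pi$ and thus in the exceptional locus. Consequently $\widetilde{g}(\mathbb{C})$ lies over $X_{\mathrm{sing}}$, so $g(\mathbb{C})=\pi(\widetilde{g}(\mathbb{C}))\subseteq X_{\mathrm{sing}}$, contradicting the choice of $g$. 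Hence $X$ admits no nonconstant entire curve and is Brody hyperbolic, and therefore hyperbolic.

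The main obstacle is the precise invocation of Yamanoi's theorem: what is really needed is the identification, for smooth projective varieties of maximal Albanese dimension, of the Green--Griffiths--Lang exceptional set (the smallest closed subset containing the image of every entire curve) with the special locus $\mathrm{Sp}$, which is the substance of \cite{YamAb}. The remaining difficulty is one of bookkeeping---lifting entire curves along $\pi$ and controlling $\mathrm{Sp}(\widetilde{X})$ by the exceptional locus---so that the conclusion descends from the smooth model $\widetilde{X}$ to the possibly singular $X$. If Yamanoi's result is available directly for (possibly singular) projective varieties of maximal Albanese dimension, this resolution-and-induction step can be omitted and one concludes immediately that $X$ is Brody hyperbolic modulo $\mathrm{Sp}(X)=\emptyset$.
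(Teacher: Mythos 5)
Your proposal is correct, and it rests on the same essential inputs as the paper's proof: resolution of singularities, the Kawamata--Yamanoi results for smooth projective varieties of maximal Albanese dimension ($V$ is of general type if and only if $\mathrm{Sp}(V)\neq V$, and in that case $V^{\an}$ is Brody hyperbolic modulo $\mathrm{Sp}(V)^{\an}$), and Brody's theorem to upgrade Brody hyperbolicity to hyperbolicity on a compact space. Where you genuinely differ is in the descent from the smooth model back to the possibly singular $X$. The paper applies the Kawamata--Yamanoi argument to a resolution $\widetilde{Y}$ of \emph{every} closed subvariety $Y\subseteq X$, concludes that each $Y$ is pseudo-Brody hyperbolic by citing the birational invariance of pseudo-Brody hyperbolicity, and then gets Brody hyperbolicity of $X$ by taking $Y$ to be the Zariski closure of a putative nonconstant entire curve. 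You instead resolve only $X$ itself, observe that grouplessness of $X$ yields the stronger containment $\mathrm{Sp}(\widetilde{X})\subseteq \mathrm{Exc}(\pi)$ (not merely $\mathrm{Sp}(\widetilde{X})\neq\widetilde{X}$), lift entire curves explicitly along $\pi$, and dispose of $X_{\mathrm{sing}}$ by induction on dimension. Both devices address the same issue---applying Yamanoi to $\widetilde{X}$ alone gives only pseudo-Brody hyperbolicity of $X$---and each buys something: the paper's quantification over subvarieties avoids any induction and hides the curve-lifting lemma inside the cited birational invariance, while your route is more self-contained, proving the needed descent by hand. Two small points to tighten in your write-up: since $\mathrm{Sp}$ is defined via rational maps, the argument that $\mathrm{Sp}(\widetilde{X})\subseteq\mathrm{Exc}(\pi)$ should be phrased for nonconstant rational maps $A\ratmap \widetilde{X}$ rather than morphisms (it goes through verbatim, using the equivalence recalled in the introduction that grouplessness rules out nonconstant rational maps from abelian varieties); and you should fix $\pi$ to be an isomorphism over $X\setminus X_{\mathrm{sing}}$, as Hironaka permits, so that $\pi(\mathrm{Exc}(\pi))\subseteq X_{\mathrm{sing}}$ and so that the meromorphic lift of an entire curve not contained in $X_{\mathrm{sing}}$ exists in the first place.
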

\begin{proof}
Recall that, for $V$ a projective variety over $\mathbb{C}$, the special locus $\mathrm{Sp}(V)$ is defined to be $\cup_{A,f} \mathrm{Im}(f)$, where the union runs over all abelian varieties $A$ over $\mathbb{C}$ and all nonconstant rational maps $f \colon A \ratmap V$. By \cite[Corollary~1]{YamAb} and \cite[Theorem~13]{KawamataChar}, for $V$ a smooth projective variety of maximal Albanese dimension over $\mathbb{C}$ (i.e., the Albanese map of $V$ is generically finite onto its image), the following statements hold.
\begin{enumerate}
\item $V$ is of general type if and only if $\mathrm{Sp}(V) \neq V$.
\item If $V$ is of general type, then $V^{\an}$ is Brody hyperbolic modulo (the proper closed subset) $\mathrm{Sp}(V)^{\an}$. 
\end{enumerate}
Let $Y \subseteq X$ be a closed subvariety. Let $\widetilde{Y}\to Y$ be a resolution of singularities. Note that $\widetilde{Y}$ is a smooth projective variety of maximal Albanese dimension. Since $X$ is groupless, we have that $\mathrm{Sp}(\widetilde{Y}) \neq \widetilde{Y}$. Thus, by the aforementioned results of Kawamata and Yamanoi, we have that $\widetilde{Y}^{\an}$ is Brody hyperbolic modulo the proper closed subset $\mathrm{Sp}(\widetilde{Y})^{\an}$. It follows that $\widetilde{Y}^{\an}$ is pseudo-Brody hyperbolic. In particular, since pseudo-Brody hyperbolicity is a birational invariant amongst projective varieties, we have that $Y^{\an}$ is pseudo-Brody hyperbolic. % for liftablity see \cite[Lemma~3.5.29]{KobayashiBook} 
Since $Y$ was chosen arbitrarily, it follows that $X^{\an}$ is Brody hyperbolic. Since $X^{\an}$ is a Brody hyperbolic compact complex-analytic space, it follows from Brody's theorem that $X^{\an}$ is hyperbolic \cite{Brody}.
\end{proof}

\begin{corollary}\label{cor:Yamanoi_groupless_family}
Let $\mathcal{X} \to B$ be a proper surjective morphism of varieties over $\mathbb{C}$ with $B$ a smooth projective curve. Assume that $\mathcal{X}_{K(B)}$ admits a finite morphism to an abelian variety $A$ over $K(B)$, where $K(B)$ denotes the function field of $B$. If $\mathcal{X}_{K(B)}$ is groupless, then the set of $b$ in $B(\mathbb{C})$ for which $\mathcal{X}_b$ is not hyperbolic is a Euclidean-closed countable subset of $B^{\an}$.
\end{corollary}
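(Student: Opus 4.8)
The plan is to deduce the statement from Corollary \ref{cor:parshin_plus_epsilon}. To apply that corollary I must exhibit two things: a point $b_0 \in B(\mathbb{C})$ with $\mathcal{X}_{b_0}$ groupless, and a dense open $U \subseteq B$ on which the set of $b \in U(\mathbb{C})$ with $\mathcal{X}_b$ groupless coincides with the set of $b \in U(\mathbb{C})$ with $\mathcal{X}_b$ hyperbolic. The essential input that upgrades ``groupless'' to ``hyperbolic'' on individual fibres will be Theorem \ref{thm:Yamanoi}, while the groupless fibre $b_0$ will come from Theorem \ref{thm:groupless_families}.

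First I would spread out the hypothesis over the generic point. The abelian variety $A$ over $K(B)$ extends to an abelian scheme $\mathcal{A} \to U$ over some dense open $U \subseteq B$, and, after shrinking $U$, the finite morphism $\mathcal{X}_{K(B)} \to A$ extends to a finite $U$-morphism $\mathcal{X}_U \to \mathcal{A}$ (finiteness, being properness together with quasi-finiteness, spreads out over a dense open). Shrinking $U$ once more, I may assume by generic geometric integrality in characteristic zero that every fibre $\mathcal{X}_b$ with $b \in U(\mathbb{C})$ is a geometrically integral proper variety; being finite over the abelian variety $\mathcal{A}_b$, it is in fact projective. Thus for each $b \in U(\mathbb{C})$ the fibre $\mathcal{X}_b$ is a projective variety admitting a finite morphism to an abelian variety. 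For such $b$ I then claim $\mathcal{X}_b$ is groupless if and only if it is hyperbolic: if $\mathcal{X}_b$ is groupless, then Theorem \ref{thm:Yamanoi} shows it is hyperbolic; conversely, if $\mathcal{X}_b$ is hyperbolic, then it is Brody hyperbolic (by Brody's theorem, as $\mathcal{X}_b$ is compact), hence carries no nonconstant entire curve and therefore no nonconstant morphism from an abelian variety, so $\mathcal{X}_b$ is groupless. This provides the required dense open $U$.

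It remains to produce a groupless fibre $b_0$. Since $\mathcal{X}_{K(B)}$ is groupless, so is its base change to $\overline{K(B)}$. By Theorem \ref{thm:groupless_families}, the set of $b \in B(\mathbb{C})$ with $\mathcal{X}_b$ not groupless is a countable union of Zariski-closed subsets of $B$. Were this set all of $B(\mathbb{C})$, then, since $B(\mathbb{C})$ is uncountable while every proper Zariski-closed subset of the curve $B$ is finite, one of these subsets would equal $B$ itself; through the Hom-scheme construction underlying Theorem \ref{thm:groupless_families} this would yield a nonconstant morphism from an abelian variety to the geometric generic fibre $\mathcal{X}_{\overline{K(B)}}$, contradicting its grouplessness. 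Hence the non-groupless locus is a proper, and therefore countable, subset of $B(\mathbb{C})$, and I may pick $b_0 \in U(\mathbb{C})$ with $\mathcal{X}_{b_0}$ groupless. With $b_0$ and $U$ in hand, Corollary \ref{cor:parshin_plus_epsilon} applies directly and shows that the non-hyperbolic locus is a Euclidean-closed countable subset of $B^{\an}$, as desired. I expect the main technical point to be the spreading-out step --- verifying that the finite morphism extends finitely over a dense open and that the fibres there are geometrically integral and projective --- together with using the structure of Theorem \ref{thm:groupless_families} to exclude a full Zariski-closed component from the non-groupless locus.
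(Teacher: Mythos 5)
Your proof is correct and follows essentially the same route as the paper's: spread out the abelian variety and the finite morphism over a dense open $U$, use Theorem \ref{thm:Yamanoi} to identify the groupless and the hyperbolic fibres over $U$, produce a groupless fibre $b_0$ via Theorem \ref{thm:groupless_families}, and conclude by Corollary \ref{cor:parshin_plus_epsilon}. Your only (harmless) detour is the appeal to the Hom-scheme construction behind Theorem \ref{thm:groupless_families}: this is unnecessary, since that theorem describes the non-groupless locus as a union of closed subschemes of the scheme $B$, so grouplessness of the generic fibre $\mathcal{X}_{K(B)}$ already excludes the generic point, forcing every closed subscheme in the union to be finite.
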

\begin{proof} 
Let $U \subseteq B$ be a dense open subset such that there is an abelian scheme $\mathcal{A} \to U$ with $\mathcal{A}_{K(B)} $ isomorphic to $A$ over $K(B)$. Shrinking $U$ if necessary, we may spread out the finite morphism $\mathcal{X}_{K(B)} \to A$ to obtain a finite morphism $\mathcal{X}_U \to \mathcal{A}_U$ of $U$-schemes. Thus, by Theorem \ref{thm:Yamanoi}, we see that for every $b$ in $U(\mathbb{C})$, the fibre $\mathcal{X}_b$ is groupless if and only if it is hyperbolic. By Theorem \ref{thm:groupless_families}, the set of $b$ in $B$ for which $\mathcal{X}_b$ is not groupless is a countable union of Zariski-closed subsets of $B$. Since $X_{K(B)}$ is groupless, the set of $b$ in $B$ for which $\mathcal{X}_b$ is not groupless does not contain the generic point. Thus, we see that there is a point $b_0$ in $B(\mathbb{C})$ for which $\mathcal{X}_{b_0}$ is groupless. Consequently, Corollary \ref{cor:parshin_plus_epsilon} applies and we conclude. 
\end{proof}

\section{Parshin's method}
Any finiteness statement boils down to some other finiteness statement. The crux of Parshin's method for proving finiteness of rational points is the following finiteness property of fundamental groups of hyperbolic varieties:

\begin{lemma}[Fundamental groups]\label{lemma:parshin_topological} 
Let $\mathcal{X}$ be a complete hyperbolic complex-analytic space, let $x \in \mathcal{X}$ and let $\varepsilon$ be a real number. Then, the set of elements of $\pi_1(\mathcal{X},x)$ which can be represented by a loop of Kobayashi-length at most $\varepsilon$ is finite.
\end{lemma}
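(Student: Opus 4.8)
The plan is to pass to the universal cover and convert the statement into a counting problem for a freely and properly discontinuously acting group, where the completeness hypothesis supplies the crucial compactness.

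First I would let $p \colon \widetilde{\mathcal X} \to \mathcal X$ be the universal covering space; it is again a complex-analytic space and $p$ is a local biholomorphism. Fix a point $\tilde x \in \widetilde{\mathcal X}$ lying over $x$. The fundamental group $\Gamma := \pi_1(\mathcal X, x)$ acts on $\widetilde{\mathcal X}$ by deck transformations, freely and properly discontinuously, and $g \mapsto g \cdot \tilde x$ is an injection of $\Gamma$ into the orbit $\Gamma \cdot \tilde x$. It then suffices to show that only finitely many $g \in \Gamma$ can be represented by a loop at $x$ of Kobayashi length at most $\varepsilon$.

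The key mechanism is that $p$ does not increase Kobayashi length. Given a loop $\gamma$ at $x$ representing $g$, lift it to the unique path $\tilde\gamma$ in $\widetilde{\mathcal X}$ starting at $\tilde x$; by the path-lifting property $\tilde\gamma$ ends at $g \cdot \tilde x$. Since $\D$ is simply connected, every holomorphic disk $f \colon \D \to \mathcal X$ lifts through $p$, so the infinitesimal Kobayashi--Royden pseudometric satisfies $\|\widetilde v\|_{\widetilde{\mathcal X}} \le \|dp(\widetilde v)\|_{\mathcal X}$ for every tangent vector $\widetilde v$ (the lift of a disk realizing a competitor for $dp(\widetilde v)$ downstairs is a competitor for $\widetilde v$ upstairs). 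Integrating along $\tilde\gamma$, the Kobayashi length of $\tilde\gamma$ in $\widetilde{\mathcal X}$ is at most that of $\gamma$ in $\mathcal X$, hence at most $\varepsilon$. As the Kobayashi distance is bounded above by the length of any connecting path, this yields $d_{\widetilde{\mathcal X}}(\tilde x, g \cdot \tilde x) \le \varepsilon$. Thus every relevant $g$ lies in $S := \{ g \in \Gamma : d_{\widetilde{\mathcal X}}(\tilde x, g \cdot \tilde x) \le \varepsilon \}$, and it remains only to prove that $S$ is finite.

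Finally I would invoke completeness. Since $\mathcal X$ is complete hyperbolic, so is its universal cover $\widetilde{\mathcal X}$ by \cite[Theorem~3.2.8]{KobayashiBook}. It is a standard fact that in a complete hyperbolic complex space every closed Kobayashi ball of finite radius is compact (a Hopf--Rinow-type statement; see \cite{KobayashiBook}), so the closed ball $\overline{B}(\tilde x, \varepsilon)$ is compact. On the other hand, the orbit $\Gamma \cdot \tilde x$ is discrete and closed because $\Gamma$ acts properly discontinuously, so $\Gamma \cdot \tilde x \cap \overline{B}(\tilde x, \varepsilon)$ is finite; by injectivity of $g \mapsto g \cdot \tilde x$ the set $S$ is finite, which completes the argument. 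I expect the main obstacle to be the third paragraph: one must set up the infinitesimal Kobayashi--Royden pseudometric and the disk-lifting carefully in order to compare the length of a loop downstairs with the distance between the corresponding orbit points upstairs. The compactness of closed balls, which is exactly where the completeness hypothesis is used, is the other essential ingredient, though it is a standard consequence of the theory once the inner-metric structure of the Kobayashi distance is in hand.
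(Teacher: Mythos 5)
Your proposal is correct and follows essentially the same route as the paper's proof: pass to the universal cover, show that a loop of Kobayashi-length at most $\varepsilon$ forces its lifted endpoint into the closed Kobayashi ball of radius $\varepsilon$ around $\tilde{x}$, and use the fact that complete hyperbolicity passes to the cover so that this ball is compact, whence its intersection with the discrete fibre (equivalently, the orbit $\Gamma \cdot \tilde{x}$) is finite. The only cosmetic difference is that you justify the length comparison via disk-lifting and the infinitesimal Kobayashi--Royden pseudometric, whereas the paper simply cites Kobayashi's theorem that a covering projection is a local isometry for the Kobayashi metrics (Theorem~3.2.8 of Kobayashi's book), which sidesteps any regularity issues in relating lengths upstairs and downstairs.
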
 
\begin{proof} 
(This is \cite[Lemma~2]{ParshinLang}; see also \cite[Lemma~4.5]{SzamuelyParshin}.)
Let $p \colon \widetilde{\mathcal{X}}\to \mathcal{X}$ be the universal cover of $\mathcal{X}$. Let $\tilde{x}$ be a point of $\widetilde{\mathcal{X}}$ lying over $x$. Then, a loop in $\mathcal{X}$ at $x$ lifts to a path in $\widetilde{\mathcal{X}}$ from $\tilde{x}$ to a point $y$ in $p^{-1}(x)$, and the class of the loop in $\pi_1(\mathcal{X},x)$ is determined by the point $y$. As the map $p$ is a local isometry for the Kobayashi metrics on $\widetilde{\mathcal{X}}$ and $\mathcal{X}$ (see \cite[Theorem~3.2.8]{KobayashiBook}), we obtain a bijection from the set of elements of $\pi_1(\mathcal{X},x)$ representable by loops of length at most $\varepsilon$ to the set 
\[ \{ y \in p^{-1}(x)~|~d_{\widetilde{\mathcal{X}}} (\tilde{x},y) \leq \varepsilon \}. \]

By \cite[Theorem 3.2.8]{KobayashiBook}, we see that $\widetilde{\mathcal{X}}$ is complete hyperbolic. Thus, combining \cite[Proposition~1.1.9]{KobayashiBook} with \cite[Theorem~3.1.15]{KobayashiBook} and \cite[Theorem~3.2.1]{KobayashiBook}, we see that the closed ball
\[ \{ y \in \widetilde{\mathcal{X}}~|~d_{\widetilde{\mathcal{X}}}(\tilde{x},y) \leq \varepsilon \} \]
of radius $\varepsilon$ is a compact subset of $\widetilde{\mathcal{X}}$. Therefore, the set $\{ y \in p^{-1}(x)~|~d_{\widetilde{\mathcal{X}}}(\tilde{x},y) \leq \varepsilon \}$ is compact as well. As it is also discrete, we conclude that it is finite, as required.
\end{proof}

Let $f \colon \mathcal{X}\to B$ be a surjective morphism of complex-analytic spaces. Choose a basepoint $x_0\in \mathcal{X}$ and define $b_0:=f(x_0)$. Let $\Sec(f):=\mathrm{Hol}_B(B,\mathcal{X})$ be the set of sections of $f$. Let $\ConSec(\pi_1(f))$ be the set of conjugacy classes of sections of the induced homomorphism $f_\ast\colon \pi_1(\mathcal{X},x_0)\to \pi_1(B,b_0)$. We define the \emph{Kummer map of $f$} to be the map of sets
\[ \Kum_f \colon \Sec(f) \to \ConSec(\pi_1(f)) \]
which associates to a section of $f$ its associated conjugacy class of sections of $\pi_1(f)$. 

The image of the Kummer map can be controlled using the finite generation of fundamental groups and the Kobayashi metric, assuming the fibres of $f$ are hyperbolic.

\begin{theorem}[Hyperbolic varieties] \label{theorem:hyperbolic_families}
Let $B$ be a  hyperbolic connected Riemann surface whose fundamental group $\pi_1(B)$ is finitely generated. Let $f \colon \mathcal{X} \to B$ be a proper surjective holomorphic map of complex-analytic spaces with connected fibres. Assume that, for every $b$ in $B$, the fibre $\mathcal{X}_b = f^{-1}(b)$ is hyperbolic. Then, the image of the Kummer map $\Kum_f$ is finite.
\end{theorem}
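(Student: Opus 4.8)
The plan is to carry out Parshin's strategy: convert each holomorphic section into a loop of \emph{bounded} Kobayashi-length in the total space, and then invoke the finiteness of short loops supplied by Lemma~\ref{lemma:parshin_topological}. Since that lemma requires a complete hyperbolic space, the first and decisive task is to show that the total space $\mathcal{X}$ is itself complete hyperbolic; I expect \emph{this} to be the main obstacle, because hyperbolicity cannot be checked locally on $B$ (restricting the Kobayashi distance to a neighbourhood only increases it, which is the wrong direction for proving that distinct points in a single fibre stay apart in $\mathcal{X}$).

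First I would establish that $\mathcal{X}$ is complete hyperbolic, splitting this into hyperbolicity and completeness. Hyperbolicity is the genuinely external input. As supporting evidence it is immediate that $\mathcal{X}$ is Brody hyperbolic: any holomorphic $g\colon\mathbb{C}\to\mathcal{X}$ composes with $f$ to a map into the hyperbolic (hence Brody hyperbolic) Riemann surface $B$, so $f\circ g$ is constant and $g$ lands in a single fibre $\mathcal{X}_b$, which is compact and hyperbolic, forcing $g$ to be constant. Upgrading Brody hyperbolicity to hyperbolicity for the noncompact $\mathcal{X}$ is the crux; here I would cite Kobayashi's results on proper hyperbolic fibre spaces \cite{KobayashiBook}, using that $f$ is proper with all fibres hyperbolic over the hyperbolic base $B$. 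Completeness is then essentially free from properness: since $B$ is a hyperbolic Riemann surface it is complete hyperbolic (Remark~\ref{remark:RS_is_complete}), so each set $\{b\in B:d_B(b_0,b)\le r\}$ is compact; as $f$ is distance-decreasing we have $\{y\in\mathcal{X}:d_{\mathcal{X}}(x_0,y)\le r\}\subseteq f^{-1}(\{b:d_B(b_0,b)\le r\})$, whose right-hand side is compact by properness. Because $d_{\mathcal{X}}$ induces the topology of $\mathcal{X}$ \cite[Theorem~3.2.1]{KobayashiBook}, the left-hand closed ball is a closed subset of a compact set, hence compact; all closed Kobayashi balls being compact, $d_{\mathcal{X}}$ is complete, so $\mathcal{X}$ is complete hyperbolic.

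Next I would manufacture the short loops. Fix generators $\gamma_1,\dots,\gamma_n$ of the finitely generated group $\pi_1(B,b_0)$, represented by loops $\ell_1,\dots,\ell_n$ at $b_0$ of finite Kobayashi-lengths $L_1,\dots,L_n$. The fibre $\mathcal{X}_{b_0}$ is compact by properness and hyperbolic, hence of finite Kobayashi-diameter $D$, and it is connected by hypothesis. Given $\sigma\in\Sec(f)$, the point $\sigma(b_0)$ lies in $\mathcal{X}_{b_0}$, so I may join $x_0$ to $\sigma(b_0)$ by a path $\alpha_\sigma$ inside $\mathcal{X}_{b_0}$ of Kobayashi-length at most $D+1$. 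As both $\sigma$ and the inclusion $\mathcal{X}_{b_0}\hookrightarrow\mathcal{X}$ are distance-decreasing, the loop $\alpha_\sigma\cdot(\sigma\circ\ell_i)\cdot\overline{\alpha_\sigma}$ is based at $x_0$, represents $s_\sigma(\gamma_i)$ (where $s_\sigma\colon\pi_1(B,b_0)\to\pi_1(\mathcal{X},x_0)$ is the group-theoretic section underlying $\Kum_f(\sigma)$ for this choice of $\alpha_\sigma$), and has Kobayashi-length at most $\varepsilon_i:=2(D+1)+L_i$.

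Finally I would run the counting argument. By Lemma~\ref{lemma:parshin_topological} applied to the complete hyperbolic space $\mathcal{X}$, for each $i$ the set $S_i$ of elements of $\pi_1(\mathcal{X},x_0)$ representable by a loop of length at most $\varepsilon_i$ is finite. The previous step gives $s_\sigma(\gamma_i)\in S_i$ for every section $\sigma$, so the tuple $(s_\sigma(\gamma_1),\dots,s_\sigma(\gamma_n))$ ranges over the finite set $S_1\times\cdots\times S_n$. Since a homomorphism out of $\pi_1(B,b_0)$ is determined by its values on the generators, only finitely many homomorphisms $s_\sigma$ occur, and a fortiori only finitely many conjugacy classes, i.e. $\mathrm{Im}(\Kum_f)$ is finite. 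The only delicate points, beyond the hyperbolicity input flagged above, are choosing $\alpha_\sigma$ of controlled \emph{length} rather than merely controlled distance (for which one uses that the Kobayashi distance on the compact fibre is an inner metric) and the dependence of $s_\sigma$ on $\alpha_\sigma$, which alters $s_\sigma$ only within its conjugacy class and is therefore harmless for the statement.
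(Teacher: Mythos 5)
Your proposal is correct and follows essentially the same route as the paper: both establish complete hyperbolicity of the total space via Kobayashi's proper-fibre-space results (the paper cites \cite[Corollary~3.11.2]{KobayashiBook}, which yields hyperbolicity and completeness in one stroke, subsuming your separate completeness argument), then conjugate the images $\sigma\circ\gamma_i$ of generating loops by fibre paths of bounded Kobayashi-length and invoke Lemma~\ref{lemma:parshin_topological} to conclude that only finitely many homomorphisms $\pi_1(B,b)\to\pi_1(\mathcal{X},x)$ arise. Your extra care about path length versus distance (the $D+1$ slack) and about the conjugacy-class ambiguity in the choice of connecting path are sound refinements of details the paper treats more briskly.
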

\begin{proof}
(This is \cite[Proposition~2]{ParshinLang}; see also the proof of \cite[Claim~4.4]{SzamuelyParshin}.)
Let $x\in \mathcal{X}$ and let $b:=f(x)$. Let $\gamma_1,\ldots,\gamma_n$ be paths that generate $\pi_1(B,b)$ and let $d$ denote the maximum of the Kobayashi-lengths of the $\gamma_i$. Let $d'$ be the Kobayashi-diameter of $\mathcal{X}_b$, and note that $d'$ is finite by compactness of $\mathcal{X}_b$. (Recall that $d'$ is the maximum of $d_{\mathcal{X}_b}(y,z)$, as $y$ and $z$ range over the compact metric space $\mathcal{X}_b$.) 
For every point $x'$ in $\mathcal{X}_b$, fix a path $L_{x'}$ from $x$ to $x'$ of Kobayashi-length at most $d'$. Then, if $\sigma \colon B \to \mathcal{X}$ is a section and $\gamma$ is a loop in $B$ based at $b$, the concatenation of the three paths $L_{\sigma(b)}^{-1}$ (where the inverse refers to reversing the path-direction), $\sigma \circ \gamma$, and $L_{\sigma(b)}$, is a loop in $\mathcal{X}$ based at $x$. This construction defines a morphism $\pi_1(B,b) \to \pi_1(\mathcal{X},x)$ and the class of this morphism in $\mathrm{ConSec}(\pi_1(f))$ is $\Kum_f(\sigma)$. Observe that this morphism sends every $\gamma_i$ to a loop of Kobayashi-length at most $d+2d'$.

 Since $f$ is proper with hyperbolic fibres, its fibres are complete hyperbolic (see \cite[Theorem~3.2.1]{KobayashiBook}).   Moreover, since a hyperbolic connected Riemann surface is complete hyperbolic (see Remark \ref{remark:RS_is_complete}), it follows from \cite[Corollary~3.11.2]{KobayashiBook} that $\mathcal{X}$ is complete hyperbolic. Lemma \ref{lemma:parshin_topological} now implies that there are only finitely many elements of $\pi_1(\mathcal{X},x)$ representable by a path of Kobayashi-length at most $d+2d'$. As there are only finitely many $\gamma_i$ and their images in $\pi_1(\mathcal{X},x)$ fully determine the morphism $\pi_1(B,b) \to \pi_1(\mathcal{X},x)$, we see that the elements of $\mathcal{X}(B)$ induce only finitely many morphisms $\pi_1(B,b) \to \pi_1(\mathcal{X},x)$. This concludes the proof.
\end{proof}

Parshin does not prove the finiteness of the fibres of $\Kum_f$ directly. Instead, he shifts his attention to abelian schemes (which is of course only helpful if $\mathcal{X}$ is closely related to one). To infer properties of $\Kum_f$ from the Kummer map of another family of varieties, we require the compatibility of Kummer maps. To explain this, let $B$ be a complex-analytic space and let $\varphi \colon \mathcal{X}\to \mathcal{Y}$ be a morphism of complex-analytic spaces over $B$. Assume $f \colon \mathcal{X}\to B$ and $g \colon \mathcal{Y}\to B$ have connected fibres. 
The map $\varphi$ induces a map of sets $\mathrm{Sec}(f) \to \mathrm{Sec}(g)$. Moreover, given a section of $\pi_1(f)$, post-composing it with the morphism $\pi_1(\varphi)\colon \pi_1(\mathcal{X})\to \pi_1(\mathcal{Y})$ gives an element of $\mathrm{ConSec}(\pi_1(g))$, and one easily checks that this gives rise to a map of sets $\mathrm{ConSec}(\pi_1(f)) \to \mathrm{ConSec}(\pi_1(g))$. These maps are easily seen to be compatible, i.e., the following diagram commutes.
\[
\xymatrix{ \mathrm{Sec}(f) \ar[rr] \ar[d]_{\mathrm{Kum}_f}& & \mathrm{Sec}(g) \ar[d]^{\mathrm{Kum}_g} \\ \mathrm{ConSec}(\pi_1(f)) \ar[rr] & & \mathrm{ConSec}(\pi_1(g))}
\]

The above results lead to the following criterion for finiteness of sections.

\begin{theorem}[Criterion]\label{thm:parshin}
Let $B$ be a smooth quasi-projective hyperbolic curve over $\mathbb{C}$. Let $f \colon \mathcal{X}\to B$ and $g \colon \mathcal{Y}\to B$ be proper morphisms of varieties over $\mathbb{C}$ with connected fibres. Let $\mathcal{X}\to \mathcal{Y}$ be a finite morphism over $B$. Let $B^o \subseteq B^{\an}$ be a non-empty connected Euclidean-open. Assume that
\begin{enumerate}
\item the restricted Kummer map $\Kum_{g^{\an}}|_{\mathcal{Y}(B)}\colon \mathcal{Y}(B)\to \mathrm{ConSec}(\pi_1(g^{\an}))$ has finite fibres, 
\item the natural morphism $\pi_1(B^o)\to \pi_1(B^{\an})$ is an isomorphism, and 
\item for every $b$ in $B^o$, the fibre $\mathcal{X}_b$ is hyperbolic. 
\end{enumerate}
Then $\mathcal{X}(B)$ is finite.
\end{theorem}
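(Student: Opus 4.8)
The plan is to deduce the finiteness of $\mathcal{X}(B)$ from the finiteness of the image of a Kummer map of a family with \emph{hyperbolic} fibres --- namely the restriction of $f$ over $B^o$ --- and then to transport this finiteness to $\mathcal{Y}$, where assumption (i) is available. We may assume $f$ is surjective, since any section $\sigma$ satisfies $f\circ\sigma=\mathrm{id}_B$ and hence forces $f$ to be surjective; if $f$ is not surjective then $\mathcal{X}(B)=\emptyset$ and there is nothing to prove. Passing to analytifications, set $\mathcal{X}^o:=(f^{\an})^{-1}(B^o)$ and $\mathcal{Y}^o:=(g^{\an})^{-1}(B^o)$, with the induced proper surjective maps $f^o\colon\mathcal{X}^o\to B^o$, $g^o\colon\mathcal{Y}^o\to B^o$ and the finite $B^o$-morphism $\varphi^o\colon\mathcal{X}^o\to\mathcal{Y}^o$.

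Next I would check that Theorem \ref{theorem:hyperbolic_families} applies to $f^o$. As a connected open subset of the hyperbolic Riemann surface $B^{\an}$, the base $B^o$ is again hyperbolic (the inclusion is distance-decreasing for the Kobayashi pseudometric, so $d_{B^o}$ is a genuine metric); by assumption (ii) its fundamental group is isomorphic to $\pi_1(B^{\an})$, which is finitely generated since $B$ is a smooth quasi-projective curve. The fibres of $f^o$ are connected (inherited from $f$) and hyperbolic by assumption (iii). Theorem \ref{theorem:hyperbolic_families} therefore applies and shows that $\mathrm{Im}(\Kum_{f^o})$ is \emph{finite}.

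The crux is to relate the Kummer map over $B^o$ to the one over $B^{\an}$; this is where assumption (ii) is indispensable. Using that $\pi_1(B^o)\to\pi_1(B^{\an})$ is an isomorphism, I would build a transport map $\mathrm{ConSec}(\pi_1(f^o))\to\mathrm{ConSec}(\pi_1(f^{\an}))$ by post-composing a $\pi_1$-section with $\pi_1(\mathcal{X}^o)\to\pi_1(\mathcal{X}^{\an})$ and pre-composing with the inverse of the base isomorphism, and similarly for $g$; by functoriality of $\pi_1$ these transport maps commute with $\pi_1(\varphi)_\ast$. The compatibility I must verify is that, for a section $\sigma\in\mathcal{X}(B)$, transporting $\Kum_{f^o}(\sigma^{\an}|_{B^o})$ recovers $\Kum_{f^{\an}}(\sigma^{\an})$; this is a direct basepoint-and-conjugacy computation using that $\sigma^{\an}|_{B^o}$ and $\sigma^{\an}$ are restrictions of the same section. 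Granting it, the image of $\mathcal{X}(B)$ under $\sigma\mapsto\Kum_{f^{\an}}(\sigma^{\an})$ lies in the image of the finite set $\mathrm{Im}(\Kum_{f^o})$ under the transport map, hence is finite.

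To conclude, push forward along $\varphi$. By the compatibility of Kummer maps for the finite $B$-morphism $\mathcal{X}\to\mathcal{Y}$ (the commutative square recorded just before the statement), $\Kum_{g^{\an}}(\varphi\circ\sigma)=\pi_1(\varphi^{\an})_\ast\,\Kum_{f^{\an}}(\sigma^{\an})$, so the image of $\{\varphi\circ\sigma:\sigma\in\mathcal{X}(B)\}\subseteq\mathcal{Y}(B)$ under $\Kum_{g^{\an}}$ is finite. Assumption (i) says $\Kum_{g^{\an}}|_{\mathcal{Y}(B)}$ has finite fibres, so this subset of $\mathcal{Y}(B)$ is itself finite. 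Finally $\sigma\mapsto\varphi\circ\sigma$ has finite fibres, since for fixed $\tau\in\mathcal{Y}(B)$ the sections with $\varphi\circ\sigma=\tau$ are the sections of the finite $B$-scheme $\mathcal{X}\times_{\mathcal{Y},\tau}B\to B$, of which an integral curve admits only finitely many. Hence $\mathcal{X}(B)$ is finite. I expect the one genuinely fiddly point to be the bookkeeping of the transport map in the third step --- defining it on conjugacy classes of $\pi_1$-sections and checking it intertwines the Kummer and restriction maps while tracking basepoints and conjugacy; everything else is a diagram chase powered by Theorem \ref{theorem:hyperbolic_families}, assumption (i), and the elementary finiteness of sections of a finite morphism.
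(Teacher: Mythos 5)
Your proof is correct and follows essentially the same route as the paper: restrict $f$ over $B^o$, apply Theorem \ref{theorem:hyperbolic_families} there (using that $B^o$ is hyperbolic with finitely generated fundamental group), transport the finiteness of $\mathrm{Im}(\Kum_{f^o})$ to $\Kum_{f^{\an}}|_{\mathcal{X}(B)}$ via the isomorphism $\pi_1(B^o)\cong\pi_1(B^{\an})$, and combine assumption (i) with the finiteness of the fibres of $\mathcal{X}(B)\to\mathcal{Y}(B)$. The only difference is cosmetic: you push the finite image forward to $\mathcal{Y}(B)$ and conclude that $\varphi_*(\mathcal{X}(B))$ is finite before pulling back, whereas the paper chases the same diagram to show $\Kum_{f^{\an}}|_{\mathcal{X}(B)}$ has both finite image and finite fibres.
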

\begin{proof} 
Define $\mathcal{X}^o := (f^{\an})^{-1}(B^o)$. As the induced morphism $f^o \colon \mathcal{X}^o \to B^o$ of complex-analytic spaces has connected fibres, we may consider its Kummer map.
Since the natural map $\pi_1(B^o)\to \pi_1(B^{\an})$ is an isomorphism, the group homomorphism $\pi_1(\mathcal{X}^o) \to \pi_1(\mathcal{X}^{\an})$ induces a map of sets $\ConSec(\pi_1(f^o)) \to \ConSec(\pi_1(f^{\an}))$. We obtain the following commutative diagram.
\[ \xymatrix{
\mathrm{Sec}(f^o) \ar[d]_{\mathrm{Kum}_{f^o}} & & \mathcal{X}(B) \ar[ll]_{\textrm{inclusion}} \ar[rr] \ar[d]_{\mathrm{Kum}_{f^{\an}}} & & \mathcal{Y}(B) \ar[d]^{\mathrm{Kum}_{g^{\an}}} \\
\mathrm{ConSec}(\pi_1(f^o)) \ar[rr] & & \mathrm{ConSec}(\pi_1(f^{\an})) \ar[rr] & & \mathrm{ConSec}(\pi_1(g^{\an}))}
\]
As $\pi_1(B^{\an})$ is finitely generated and $\pi_1(B^o)$ is isomorphic to $\pi_1(B^{\an})$, the group $\pi_1(B^o)$ is finitely generated. As $B$ is hyperbolic and $B^o$ is open in $B^{\an}$, we have that $B^o$ is hyperbolic. Thus, by Theorem \ref{theorem:hyperbolic_families}, the Kummer map $\Kum_{f^o}$ has finite image. By the commutativity of the above diagram, it follows that $\Kum_{f^{\an}}$ has finite image as well.

Since $\mathcal{X}\to \mathcal{Y}$ is finite, the map $\mathcal{X}(B)\to \mathcal{Y}(B)$ has finite fibres.
Moreover, by assumption, the restricted Kummer map $\Kum_{g^{\an}}|_{\mathcal{Y}(B)} \colon \mathcal{Y}(B)\to \mathrm{ConSec}(\pi_1(g^{\an}))$ has finite fibres. Therefore, the composed map
\[ \xymatrix{\mathcal{X}(B) \ar[rr] & & \mathcal{Y}(B) \ar[rr]^-{\Kum_{g^{\an}}|_{\mathcal{Y}(B)}} & & \mathrm{ConSec}(\pi_1(g^{\an}))} \]
has finite fibres. It follows from the above diagram that the restricted Kummer map
\[ \Kum_{f^{\an}}|_{\mathcal{X}(B)} \colon \mathcal{X}(B) \to \mathrm{ConSec}(\pi_1(f^{\an})) \]
also has finite fibres. Since it also has finite image, we conclude that $\mathcal{X}(B)$ is finite.
\end{proof}

Theorem \ref{thm:parshin} implies that the geometric Bombieri--Lang conjecture can be proven by controlling the fibres of an auxiliary Kummer map. The latter is possible for traceless families of abelian varieties using the Lang--N\'eron theorem. (Note that the following result does not guarantee the finiteness of the fibres of the Kummer map, but rather of the Kummer map restricted to the subset of \emph{algebraic} sections.)

\begin{theorem}[Abelian varieties] \label{thm:ln}
Let $B$ be a smooth curve over $\mathbb{C}$ with function field $K = K(B)$. Let $g\colon \mathcal{A}\to B$ be an abelian scheme. If the $K/\mathbb{C}$-trace of $\mathcal{A}_K$ is zero, then the restricted Kummer map $\Kum_{g^{\an}}\colon \mathcal{A}(B)\to \mathrm{ConSec}(\pi_1(g^{\an}))$ has finite fibres.
\end{theorem}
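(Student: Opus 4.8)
The plan is to combine the finiteness of the Mordell--Weil group $\mathcal{A}(B)$ with a description of the Kummer map as a topological connecting homomorphism, and then to isolate the genuinely arithmetic content in a single statement about its kernel. First I would record that $\mathcal{A}(B)=\mathcal{A}_K(K)$ and invoke the Lang--Néron theorem: since the $K/\CC$-trace of $\mathcal{A}_K$ vanishes, the group $\mathcal{A}(B)$ is finitely generated (see \cite{Conrad}). Because $\Kum_{g^{\an}}$ will turn out to be a homomorphism of abelian groups, its fibres on $\mathcal{A}(B)$ are cosets of its kernel; as $\mathcal{A}(B)$ is finitely generated, the fibres are finite as soon as this kernel is a \emph{torsion} group. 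The whole proof therefore reduces to bounding the kernel.

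Next I would set up the topology. By Ehresmann's theorem $g^{\an}\colon \mathcal{A}^{\an}\to B^{\an}$ is a locally trivial fibre bundle whose fibre is a complex torus, so the kernel of $\pi_1(g^{\an})$ is the abelian group $\Lambda:=H_1(\mathcal{A}_{b_0},\ZZ)\cong \ZZ^{2\dim \mathcal{A}_{b_0}}$, and $B^{\an}$ is a $K(\pi,1)$. Since the kernel is abelian, conjugacy classes of sections of the extension are classified by group cohomology, giving a canonical identification $\ConSec(\pi_1(g^{\an}))\cong H^1(\pi_1(B,b_0),\Lambda)=H^1(B^{\an},\underline{\Lambda})$ under which the zero section is the origin and $\Kum_{g^{\an}}$ is additive (additivity coming from the group law $\mathcal{A}\times_B\mathcal{A}\to\mathcal{A}$). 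The key structural input is that $\Kum_{g^{\an}}$ is then the connecting homomorphism of the fibrewise exponential sequence of sheaves on $B^{\an}$,
\[ 0 \longrightarrow \underline{\Lambda} \longrightarrow \mathrm{Lie}(\mathcal{A}/B)^{\an} \xrightarrow{\ \exp\ } \mathcal{A}^{\an} \longrightarrow 0, \]
where $\mathcal{A}^{\an}$ now denotes the sheaf of holomorphic sections of $g^{\an}$. Taking cohomology identifies $\Kum_{g^{\an}}$ with the connecting map $\delta\colon H^0(B^{\an},\mathcal{A}^{\an})\to H^1(B^{\an},\underline{\Lambda})$, so that $\ker \Kum_{g^{\an}}$ consists exactly of those holomorphic sections that lift to a global holomorphic section of the Lie algebra bundle $\mathrm{Lie}(\mathcal{A}/B)$.

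It then remains to prove the one arithmetic statement, which I expect to be the main obstacle: \emph{an algebraic section $\sigma\in\mathcal{A}(B)$ that lifts to a global holomorphic section of $\mathrm{Lie}(\mathcal{A}/B)$ is torsion.} This is precisely where the hypothesis must enter, and note that it is false for the full group $H^0(B^{\an},\mathcal{A}^{\an})$ of all holomorphic sections, which is exactly why the statement only concerns the restriction to $\mathcal{A}(B)$. I would prove it by a rigidity argument: passing to the smooth projective model of $B$ (which changes neither $K$ nor the trace) and to the Néron model, such a lift is horizontal for the Gauss--Manin connection, so the associated normal function has torsion cohomology class; by the theorem of the fixed part for the underlying polarized variation of Hodge structure, it must originate from the fixed (constant) part of the family, i.e. from the $K/\CC$-trace of $\mathcal{A}_K$. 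Since this trace is zero, $\sigma$ is torsion. Equivalently, this step is the assertion that the Betti realization $\mathcal{A}(B)\otimes\QQ\hookrightarrow H^1(B^{\an},\underline{\Lambda})\otimes\QQ$ is injective, which is the arithmetic half of the trace-zero Lang--Néron package. Combining this with the finite generation recorded above, each fibre of $\Kum_{g^{\an}}|_{\mathcal{A}(B)}$ is a coset of a finitely generated torsion, hence finite, group, and we are done.
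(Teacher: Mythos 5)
Your reductions (steps 1--4) are sound: $\mathcal{A}(B)=\mathcal{A}_K(K)$ is finitely generated by Lang--N\'eron once the trace vanishes; since the kernel $\Lambda$ of $\pi_1(g^{\an})$ is abelian and conjugation acts trivially on group cohomology, $\ConSec(\pi_1(g^{\an}))$ is indeed identified with $H^1(\pi_1(B^{\an}),\Lambda)$, under which $\Kum_{g^{\an}}$ is additive and agrees with the connecting map of the fibrewise exponential sequence; so everything reduces to showing that the kernel of this map on $\mathcal{A}(B)$ is torsion. For comparison: the paper does not reprove the statement but cites Parshin's original Proposition~1 \cite{ParshinLang}, whose argument is arithmetic rather than Hodge-theoretic. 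The topological class of $\sigma$ reduced mod $n$ is the \'etale Kummer class under $\mathcal{A}(B)/n\mathcal{A}(B)\hookrightarrow H^1_{\et}(B,\mathcal{A}[n])\cong H^1(\pi_1(B^{\an}),\Lambda/n)$ (injectivity from the Kummer sequence), so a section with vanishing topological class lies in $\bigcap_n n\mathcal{A}(B)$, which is zero in a finitely generated group. This is shorter than your route and needs no Hodge theory beyond Lang--N\'eron itself.

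The genuine gap in your proposal is the sentence ``such a lift is horizontal for the Gauss--Manin connection.'' This is not an inference; nothing you established implies it. The Gauss--Manin connection does not even descend to $\mathrm{Lie}(\mathcal{A}/B)=\mathcal{H}_{\mathcal{O}}/F^0$ (Griffiths transversality only gives $\nabla F^0\subseteq \mathcal{H}_{\mathcal{O}}\otimes\Omega^1$), so ``horizontal'' for a section $v$ of $\mathrm{Lie}$ can only mean that $v$ is the image of a flat section of $\mathcal{H}_{\mathbb{C}}$, i.e.\ of an element of $(\Lambda\otimes\mathbb{C})^{\pi_1(B^{\an})}$. Under the trace-zero hypothesis this invariant space is zero (fixed part theorem plus the identification of the fixed part with the trace), so your horizontality claim is literally equivalent to $\sigma=0$ --- it assumes the exact conclusion to be proved, and generic holomorphic sections of $\mathrm{Lie}$ over the open curve $B$ are nowhere near horizontal. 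The salvageable Hodge-theoretic route extracts flatness from a different place: the section $\sigma$ defines an extension of variations of mixed Hodge structure $0\to\Lambda_{\mathbb{Q}}\to E\to\mathbb{Q}(0)\to 0$ whose underlying extension of local systems is classified exactly by $\delta(\sigma)\in H^1(B^{\an},\Lambda_{\mathbb{Q}})$; the hypothesis $\delta(\sigma)=0$ then produces a \emph{flat section of $E$} lifting $1$, and one applies the theorem of the fixed part to $E$ --- which requires $E$ to be admissible, and this is precisely where the algebraicity of $\sigma$ (extension to the N\'eron model over the compactification, moderate growth at the punctures) must be used. Your sketch invokes the N\'eron model but never actually uses algebraicity in the argument; since, as you yourself observe, the statement is false for arbitrary holomorphic sections, any proof must use it at exactly this point, so the sketch cannot be completed as written.
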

\begin{proof}
This is \cite[p.~171, Proposition~1]{ParshinLang}.
\end{proof}

We now come to the proof of the geometric Bombieri--Lang conjecture for projective varieties with a finite morphism to a traceless abelian variety:

\begin{proof}[Proof of Theorem \ref{thm:xy}]
Note that it suffices to prove that $X(K)$ is finite. Let $B$ be a smooth quasi-projective curve with function field $K = K(B)$ and choose a proper surjective morphism $f \colon \mathcal{X}\to B$ of varieties over $\mathbb{C}$ such that $\mathcal{X}_K \cong X$ over $K$. We have that $X(K) = \mathcal{X}(B)$. To prove the finiteness of $X(K)$, we may replace $K$ by a finite field extension. Moreover, we may replace $B$ by a dense open whenever necessary. Thus, we may assume that
\begin{enumerate}
\item there is an abelian variety $A$ over $K$ such that the $\overline{K}/\mathbb{C}$-trace of $A_{\overline{K}}$ is zero,
\item there is a finite morphism $X\to A$ over $K$, 
\item the genus of the smooth projective model $\overline{B}$ of $B$ is at least two (so that $B$ is hyperbolic),
\item there is an abelian scheme $g\colon \mathcal{A}\to B$ with $\mathcal{A}_K\cong A$ over $K$, and
\item there is a finite morphism $\mathcal{X}\to \mathcal{A}$ extending the morphism $X\to A$ over $B$.
\end{enumerate}

Let $\overline{\mathcal{X}} \to \overline{B}$ be a proper surjective morphism extending $\mathcal{X} \to B$. Then, by Corollary \ref{cor:Yamanoi_groupless_family}, the set of $b$ in $\overline{B}(\mathbb{C})$ with $\overline{\mathcal{X}}_b$ not hyperbolic is a Euclidean-closed countable subset $S \subseteq \overline{B}^{\an}$. We may choose finitely many pairwise disjoint closed disks $D_i \subseteq \overline{B}^{\an}$, one centered around each point of $\overline{B} \setminus B$, such that the boundaries $\partial D_i$ are disjoint from $S$. Noting that $S$ is compact, we may choose finitely many more disks until every point of $S$ is contained in one of these disks (retaining that the disks are pairwise disjoint). Let $d_1, \ldots, d_r$ denote the centers of the disks, and note that by construction, we have that $\overline{B} \setminus \{d_1, \ldots, d_r\} \subseteq B$. Thus, we may replace $B$ by $\overline{B} \setminus \{d_1, \ldots, d_r\}$ without affecting (i) to (v) above. Let $B^o := \overline{B}^{\an} \setminus (D_1 \cup \ldots \cup D_r)$ and observe that by construction, we have that the fibre $\mathcal{X}_b$ is hyperbolic for every $b \in B^o$ and moreover that the inclusion map $B^o \subseteq B^{\an}$ is a homotopy equivalence. Now, since the restricted Kummer map $\Kum_{g^{\an}}\colon \mathcal{A}(B)\to \mathrm{ConSec}(\pi_1(g^{\an}))$ has finite fibres (Theorem \ref{thm:ln}), we conclude by Theorem \ref{thm:parshin}.
\end{proof}

\bibliography{parshin_refs}{}
\bibliographystyle{alpha}

\end{document}